\definecolor{UCRceleste}{RGB}{0,192,243}
\definecolor{mintgreen}{RGB}{152,255,152}
\definecolor{pinksalmon}{RGB}{255,102,102}
\definecolor{hueso}{RGB}{245,245,220}
\definecolor{marfil}{RGB}{255,253,208}
\definecolor{amarillo}{RGB}{255,255,0}
\numberwithin{equation}{section}
\newtheorem*{theorem*}{Theorem}
\newtheorem{theorem}{Theorem}[section]
\newtheorem{proposition}[theorem]{Proposition}
\newtheorem{corollary}[theorem]{Corollary}
\theoremstyle{definition}
\theoremstyle{remark}
\newtheorem{remark}[theorem]{Remark}
\newtheorem{example}[theorem]{Example}
\newcommand{\suchthat}{\;\ifnum\currentgrouptype=16 \middle\fi|\;}
\newcommand{\Z}{\mathbb{Z}}
\newcommand{\R}{\mathbb{R}}
\newcommand{\op}[1]{\operatorname{#1}}
\newcommand{\cal}[1]{\mathcal{#1}}
\newcommand{\ZZ}{\mathbb{Z}}
\newcommand{\set}[2]{\left\{{#1} \, : \, {#2}\right\}}
\def\abs#1{\left\vert{#1}\right\vert}
\newcommand{\lcm}{\operatorname{lcm}}
\newcommand{\val}{\operatorname{val}}
\newcommand{\tri}{\mathbf{1}}
\def\tmod#1{\;\mathrm{mod}\, {#1}}
\newcommand{\legendre}[2]{\ensuremath{\left( \frac{#1}{#2} \right) }}
\newcommand{\tlegendre}[2]{\ensuremath{( \frac{#1}{#2} ) }}
\def\abs#1{\left\vert{#1}\right\vert}
\def\dparenth#1{\left(\!\left({#1}\right)\!\right)} 
\DeclareMathOperator{\mymod}{mod\,}
\newcommand{\krop}{{\tlegendre{\cdot}{p}}}
\begin{document}

\title{On the efficient computation of Fourier coefficients of eta-quotients}

\author[A. Barquero-Sanchez et al.]{Adrian Barquero-Sanchez\orcidlink{0000-0001-7847-2938}, Juan Pablo De Rasis\orcidlink{0000-0002-6225-4618}, Nicolás Sirolli\orcidlink{0000-0002-0603-4784} and Jean Carlos Villegas-Morales\orcidlink{0009-0004-8933-0627}}

\address{Escuela de Matem\'atica, Universidad de Costa Rica, San Jos\'e 11501, Costa Rica}
\address{Centro de Investigación en Matemática Pura y Aplicada, Universidad de Costa Rica, San Jos\'e 2060, Costa Rica}
\email{adrian.barquero\_s@ucr.ac.cr}

\address{Department of Mathematics (College of Arts and Sciences), The Ohio State University. Columbus, OH 43210-1174, USA}
\email{derasis.1@osu.edu}

\address{Departamento de Matemática, FCEyN, UBA and IMAS, CONICET -- Pabellón I,
Ciudad Universitaria, Ciudad Autónoma de Buenos Aires (1428), Argentina}
\email{nsirolli@dm.uba.ar}

\address{Department of Mathematics, 3368 TAMU, Texas A\&M University, College Station, TX 77843-3368, USA}
\email{jean\_villegas.02@tamu.edu}

\begin{abstract}
	The Fourier coefficients of a negative weight eta-quotient, in many
	particular cases, and after Sussman in general,
	are known to be expressible by Hardy--Ramanujan--Rademacher type series.

	We show that the central terms of the coefficients of these series can be
	efficiently computed, showing that they can be expressed in terms of twisted
	Kloosterman sums, and that they satisfy multiplicativity relations;
	this extends the results from Lehmer for the partition function.

	We also give explicit bounds for the tails of these series, needed for
	effectively computing the aforementioned Fourier coefficients.
\end{abstract}

\keywords{Eta-quotients, Hardy--Ramanujan--Rademacher expansions, Kloosterman sums}
\subjclass[2020]{Primary: 11L05, 11Y35, 11P82. Secondary: 11P55, 11F20}

\maketitle

\section{Introduction}

The partition function $p(n)$ counts the number of ways, up to permutations, that
a positive integer $n$ can be written as a sum of positive integers.
In other words, it is defined by the formal identity

\begin{equation*}
	q^{-1/24}
	\sum_{n \geq 0} p(n) q^n =
	\frac{1}{\eta(q)},
\end{equation*}
where $\eta(q) = q^{1/24} \prod_{n \geq 1} (1-q^n)$ is Dedekind's eta function.

Obtaining $p(n)$ using this expression involves computing also $p(k)$ for $k < n$ and this is computationally heavy, making it unfeasible even for relatively small values of $n$. 
A more convenient formula for computing the numbers $p(n)$ individually was
given by Rademacher: improving results from Hardy and Ramanujan, he showed in
\cite{R38} that
\begin{equation}
	\label{eqn:partitions_HRR}
	p(n) =
	\frac{2\pi}{(24n-1)^{3/4}}
    \sum_{k \geq 1} \tfrac1k \,
    A_k(n) \,
	I_{3/2}\left(\tfrac{\pi\sqrt{24n-1}}{6k}\right),
\end{equation}
where $I_\alpha$ denotes the modified Bessel function of the first kind;
we call this a \emph{HRR expansion}.
The numbers $A_k(n)$, which we call \emph{HRR coefficients}, are given by
\begin{equation*}
	A_k(n) = 
	\sum_{\substack{0 \leq h<k \\(h, k)=1}}
	\exp\left(-2\pi i\left(\tfrac{hn}k + \tfrac12 s(h,k)\right)\right),
\end{equation*}
where $s(h,k)$ is a Dedekind sum.

The series in \eqref{eqn:partitions_HRR} can be used for computing the integer
$p(n)$, due to the existence of explicit bounds for the error term.
It becomes efficient only after the results of Lehmer (\cite{Leh38}) which show that,
instead of adding $k$-th roots of unity, the numbers $A_k(n)$ can be obtained by a much simpler computation.
More precisely, he obtained multiplicativity formulas
\begin{equation}
	\label{eqn:multip_intro}
	A_{k_1 k_2}(n) = A_{k_1}(n_1) \, A_{k_2}(n_2)
\end{equation}
for odd, coprime $k_1,k_2$ (and similar formulas in the even case);
here $n_1,n_2$ are determined by $k_1,k_2$ and $n$.
These formulas reduce the problem to computing $A_q(n)$ when $q$
is a prime power. In this case, by showing that the numbers $A_q(n)$ are given
by twisted Kloosterman sums and using closed formulas for evaluating them, he obtained that
\begin{equation}
	\label{eqn:primpep_intro}
	A_q(n) = \varepsilon_q(n) \sqrt{q} \cos(\alpha_q(n))
\end{equation}
for odd $q$ (and a similar formula for even $q$).
Here $\varepsilon_q(n) \in \{0,\pm 1,\pm 2\}$ and $\alpha_q(n) \in \mathbb R$ can be
obtained by simple calculations.

More recently, by efficiently implementing these formulas, Johansson (\cite{Joh12}) was able to
give an algorithm for computing $p(n)$ with complexity $O(n^{1/2+o(1)})$; this
is close to optimal, since the number of bits of $p(n)$ is $O(n^{1/2})$.

\medskip

The phenomena described above are not exclusive to the partition function.
The problem of obtaining HRR expansions for other partition functions has
been broadly studied in particular situations
(\cite{niven1940,hua1942,iseki1961,iskander2020}), as well as for the Fourier
coefficients of general eta-quotients of negative weight by Sussman
(\cite{Sus17}), of nonnegative weight by Chern (\cite{chern2019}), and for
harmonic Maass forms of negative weight in \cite{bringmann2011}. 

In the particular case of the overpartition function $\overline{p}(n)$, which can be described as
an eta-quotient by
\begin{equation*}
	\sum_{n \geq 0} \overline p(n) q^n =
	\frac{\eta(q^2)}{\eta^2(q)},
\end{equation*}
Barquero-Sanchez, Sirolli, Villegas-Morales and coauthors showed in \cite{us2023} that the analogous $\widetilde{A}_k(n)$ appearing
in its HRR expansion satisfy properties similar to those described above for the
partition function, and we obtained an error bound for it.

\medskip

The two main results of this article, which we now state,
show that formulas analogous to \eqref{eqn:multip_intro} and
\eqref{eqn:primpep_intro}, and to their counterparts in the case of the
overpartition function, hold for the coefficients $A_k(n)$ in the HRR series
describing the $n$-th coefficient of a general eta-quotient.
Here we simplify the notation for convenience of the reader; we refer to their statements in the body of the article for full details.
\begin{theorem*}[Theorem~\ref{thm:multiplicativity}]
	Let $k_1,k_2$ be positive integers with $\gcd(k_1,k_2) = 1$.
	Assume that there exists a positive integer $\ell$ satisfying \eqref{eqn:mult_ell}.
	Then for every $n$ there exist (effectively computable) $n_1,n_2$ such that
	\begin{equation}
		\label{eqn:multip_intro2}
		A_{k_1 k_2}(n) = A_{k_1}(n_1) \, A_{k_2}(n_2).
	\end{equation}
\end{theorem*}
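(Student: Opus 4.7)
The plan is to follow the template of Lehmer's classical argument, adapted to the Sussman formula for the HRR coefficients of a general eta-quotient. Recall that $A_{k_1 k_2}(n)$ is a sum over $h$ coprime to $k_1 k_2$ with $0 \le h < k_1 k_2$ of the eta multiplier $\omega(h, k_1 k_2)$ times $\exp(-2\pi i h n / (k_1 k_2))$. The goal is to split both the multiplier and the phase into a product of factors, one depending only on $(h_1, k_1)$ and the other only on $(h_2, k_2)$, where $h \leftrightarrow (h_1, h_2)$ is the CRT correspondence; the factorization of the sum then yields \eqref{eqn:multip_intro2}.

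First I would perform the CRT change of variables: writing $h \equiv h_1 \pmod{k_1}$ and $h \equiv h_2 \pmod{k_2}$, one has $h \equiv h_1 k_2 \overline{k_2} + h_2 k_1 \overline{k_1} \pmod{k_1 k_2}$, where $\overline{k_2}$ and $\overline{k_1}$ denote inverses modulo $k_1$ and $k_2$, respectively. Modulo $\mathbb{Z}$, the phase then becomes $h_1 \overline{k_2} n / k_1 + h_2 \overline{k_1} n / k_2$, which suggests the provisional assignments $n_1 \equiv \overline{k_2} n \pmod{k_1}$ and $n_2 \equiv \overline{k_1} n \pmod{k_2}$. These candidates will be corrected by additive shifts coming from the multiplier, as described below.

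Next I would analyze the eta multiplier. For an eta-quotient of level $N$ with divisor data $\{r_\delta\}_{\delta \mid N}$, the multiplier $\omega(h, k)$ is a product over the relevant $\delta$ of exponentials involving Dedekind sums with arguments determined by $h$, $k$ and $\delta$, together with a root-of-unity factor. For each $\delta$, I would apply Dedekind reciprocity and the usual splitting identities to rewrite the relevant sum over the modulus $k_1 k_2$ as a sum of the corresponding sums over moduli $k_1$ and $k_2$, plus a rational cross term depending on $k_1, k_2, h_1, h_2, \delta$. This is the point where the hypothesis \eqref{eqn:mult_ell} on the auxiliary integer $\ell$ enters: it is designed so that, after summing these cross contributions over $\delta$ (weighted by $r_\delta$) and taking into account the denominator $24$ inherent to the eta phase, the total cross shift is an integer multiple of $1/k_1 + 1/k_2$ in a form that can be absorbed into additive adjustments of $n_1 \pmod{k_1}$ and $n_2 \pmod{k_2}$. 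These adjustments then define the effectively computable $n_1, n_2$ in the theorem.

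The main obstacle will be the bookkeeping of the rational cross terms that arise from applying reciprocity to each $\delta$: each contributes its own correction, and verifying that the weighted sum over $\delta$ produces a well-defined integral shift modulo $k_j$ (and not merely modulo a multiple of $k_j$, such as $24 k_j$) is exactly what \eqref{eqn:mult_ell} guarantees and what must be checked carefully. Once this integrality is established, the multiplier decouples as $\omega(h, k_1 k_2) = \omega(h_1, k_1)\,\omega(h_2, k_2)$ after the corresponding shifts, the exponential phase decouples against $n_1$ and $n_2$, and the resulting double sum over $(h_1, h_2)$ factorizes to give $A_{k_1 k_2}(n) = A_{k_1}(n_1)\,A_{k_2}(n_2)$.
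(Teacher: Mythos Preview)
Your overall plan matches the paper's approach: CRT on the index $h$, split the Dedekind sums appearing in the multiplier into pieces living over $k_1$ and $k_2$, and absorb the resulting cross terms into shifts of $n_1,n_2$. Two technical points, however, are left vague in your sketch and are precisely where the work lies.

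First, the role of $\ell$. You say it is ``designed so that'' the cross terms become absorbable, but you do not indicate how it enters the computation. In the paper's argument $\ell$ is introduced by the substitution $h\mapsto \ell h$ in the sum defining $A_{k_1k_2}(n)$ (legitimate since $\gcd(\ell,k_1k_2)=1$). After this substitution the Dedekind sums have arguments $m h\ell/\gcd(m,k_1k_2)$, and the congruences \eqref{eqn:mult_ell} allow one, via $s(a,b)=s(a',b)$ when $a\equiv a'\pmod b$, to replace $\ell$ by $\gcd(m,k_i)^2$ inside the relevant sums. Without this substitution it is not clear how \eqref{eqn:mult_ell} would ever be invoked.

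Second, the ``splitting identity'' you need is not ordinary Dedekind reciprocity but the three-term Rademacher--Whiteman relation (Proposition~\ref{Rademacher-Whiteman}): for pairwise coprime $a,b,c$,
\[
s(ab,c)+s(bc,a)-s(b,ac)+\tfrac{1}{12}\Bigl(-\tfrac{ab}{c}-\tfrac{bc}{a}+\tfrac{b}{ac}+abc\Bigr)\in 2\mathbb{Z}.
\]
Applied with $a=k_2/\gcd(m,k_2)$, $b=mh\ell/\gcd(m,k_1k_2)$, $c=k_1/\gcd(m,k_1)$, this is exactly what decomposes the Dedekind sum over modulus $k_1k_2/\gcd(m,k_1k_2)$ into the two sums over $k_i/\gcd(m,k_i)$ plus explicit rational terms. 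Standard two-term reciprocity does not give this directly. Once these two ingredients are in place, the remaining verification (that the accumulated rational cross terms are divisible by $24k_1k_2$ for the chosen $n_1,n_2$) proceeds as you outline.
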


\begin{theorem*}[Theorem~\ref{thm:kloosterman}]
	For every positive integer $k$ there exist (effectively computable) integers
	$a,b,c$ and a character $\chi$ such that, for every $n$, we have
	\begin{equation}
		\label{eqn:primep_intro2}
		A_k(n) = i^c \cdot S_\chi(a-n,b;k),
	\end{equation}
	where $S_\chi$ is a twisted Kloosterman sum.
\end{theorem*}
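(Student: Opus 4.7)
The plan is to start from the explicit formula that follows from Sussman's construction: $A_k(n)$ is a sum over $h$ coprime to $k$ of $\exp(-2\pi i h n/k)$ multiplied by a phase coming from the multiplier system of the eta-quotient $\prod_\delta \eta(q^\delta)^{r_\delta}$. That phase involves (i) a combination of Dedekind sums $s(\delta h/d_\delta,\, k/d_\delta)$, with $d_\delta = \gcd(\delta, k)$, weighted by the exponents $r_\delta$, together with (ii) an eighth-root of unity depending on residues of $h$ and $k$ modulo small integers. The goal is to rewrite this sum in the form
\begin{equation*}
  A_k(n) = i^c \sum_{\substack{h \bmod k \\ \gcd(h,k)=1}} \chi(h) \, \exp\!\left(\frac{2\pi i}{k}\bigl((a-n)h + b\,h^{-1}\bigr)\right),
\end{equation*}
which is by definition $i^c\, S_\chi(a-n, b; k)$.

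The key step is the classical manipulation used by Lehmer in the partition case: combining the reciprocity law
\begin{equation*}
  12\bigl(s(h,k) + s(k,h)\bigr) = \frac{h}{k} + \frac{k}{h} + \frac{1}{hk} - 3
\end{equation*}
with the observation that $s(k,h)$ depends only on $k \bmod h$, one obtains a closed form for $\exp(-\pi i s(h,k))$ whose $h$-dependence splits cleanly into an exponential in $h^{-1} \bmod k$ and a character (a product of Jacobi symbols) evaluated at $h$. I would apply this identity to each Dedekind sum $s(\delta h/d_\delta,\, k/d_\delta)$ appearing in the multiplier, using that the inverse of $h$ modulo $k/d_\delta$ is determined by the inverse of $h$ modulo $k$. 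After gathering the resulting exponentials, the coefficients $\alpha h + \beta h^{-1}$ appearing in the argument assemble into integers $a$ and $b$ depending on the $r_\delta$, $\delta$, and $k$, while the Jacobi-symbol contributions from each $\delta$ multiply together to form the character $\chi$. The constant $c$ collects the various $i$-powers coming from the eta-multiplier prefactors and from the reciprocity identity.

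The main obstacle, as in Lehmer's original argument and in the overpartition case treated in \cite{us2023}, is the parity case analysis: the Dedekind-sum identity and the shape of the character $\chi$ differ according to the parity of $k$, and the $2$-adic valuations of $k$ and of the $\delta$'s affect which Jacobi symbols enter and with what moduli. Organizing the proof along a split on $\gcd(k,24)$ (or at least on $\nu_2(k)$ and $\nu_3(k)$) and treating each stratum via the appropriate variant of the reciprocity identity should reduce the theorem to a finite, explicit bookkeeping exercise. Once the character and constants are identified in each case, effective computability is automatic, since $\chi$, $a$, $b$, $c$ are obtained from the data $(r_\delta)_\delta$ and $k$ by closed formulas.
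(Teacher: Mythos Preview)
Your overall strategy is right—the point is precisely to show that the Dedekind-sum phase in $A_k(n)$ reduces, modulo $24k$, to a linear combination of $h$ and $\bar h$ together with a Jacobi-symbol factor—but the mechanism you name does not do this. The reciprocity law rewrites $s(h,k)$ as $-s(k,h)$ plus explicit rational terms; however $s(k,h)$ now has the \emph{summation variable} $h$ as its modulus, and the observation ``$s(k,h)$ depends only on $k \bmod h$'' is of no help when $h$ ranges over $(\ZZ/k\ZZ)^\times$: those values do not organize themselves into an exponential in $\bar h$ times a character. Reciprocity plus periodicity alone does not produce the closed form you assert.

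What the paper actually uses (and what makes Lehmer's argument work) are the congruences for $12k\,s(h,k)$ recorded in Proposition~\ref{DedekindSumProperties}: the congruence $12hk\,s(h,k)\equiv h^2+1 \pmod{\theta k}$ gives directly $12k\,s(h,k)\equiv h+\bar h$ modulo $3k$, and the congruences $12k\,s(h,k)\equiv k+1-2\tlegendre{h}{k}\pmod 8$ (and its companion for even $k$) supply the Jacobi symbol. These are sharper than bare reciprocity. The paper applies them to each $s(h_m,k_m)$, sums over $m$, and patches the results modulo $3k/2^\lambda$ and modulo $2^{3+\lambda}$ via CRT; quadratic reciprocity is then used only to normalize the Jacobi symbols into a single character in $h$. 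If you replace ``reciprocity $+$ periodicity'' by these congruences, the rest of your outline—summing the linear terms to get $a,b$, multiplying the Jacobi symbols to get $\chi$, collecting fourth-root factors into $i^c$, and a $2$-adic case split—matches the paper's proof.
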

We remark that this result is stronger than what Lehmer proved, since
\eqref{eqn:primep_intro2} holds
for every $k$ and not just for prime powers.
In particular, this gives an alternative explanation for an (unconditional)
analogue of \eqref{eqn:multip_intro2}, through the
multiplicativity of such sums.
Our proof, as in \cite{Leh38} and \cite{us2023}, is based on
well known congruences for
Dedekind sums and closed formulas for twisted Kloosterman sums.

\medskip

These results are complemented with Theorem~\ref{thm:explicit-error-bound}, which gives an error bound for
the HRR expansions of general eta-quotients of negative weight.
Such a bound is needed for using the HRR expansion to compute exactly the aforementioned Fourier coefficients.

\medskip

This article is organized as follows.
In the following section we describe the Hardy--Ramanu\-jan--Rademacher series
expansion for the coefficients of a given eta-quotient, together with the
notation that will be used throughout the article; we also give a list of
interesting examples.
In Section~\ref{sect:kloosterman_sums} we introduce twisted Kloosterman sums,
and give a series of properties that they satisfy, which will allow us to
compute them efficiently in (most of) the cases we are interested in.
In Sections~\ref{sect:hrr} and \ref{sect:multiplicativity} we prove our two main
results.
In Section~\ref{sect:error-bound} we prove
Theorem~\ref{thm:explicit-error-bound}, and describe explicitly its use in a
particular case.
Finally, in Section~\ref{sect:algorithm&examples} we synthesize our results
describing an algorithm for computing HRR coefficients, and we illustrate its
use in some examples.

\medskip

We conclude this introduction with a (random) sample of the efficiency of our
methods: when computing the number of 5-colored partitions of $10^6$, using
a plain implementation of Algorithm~\ref{alg:Ak}, running on a desktop computer,
we obtained the needed HRR coefficients in less than 9 seconds, whereas
computing them by definition takes more than 1 hour and 15 minutes.
See Examples \ref{ex:5-colored} and \ref{ex:r-colored} for details.

\subsection*{Acknowledgments}

This article has its roots in the workshop \emph{Teoría de Números en las Américas 2} workshop, held at CMO/BIRS in September 2024.
We are immensely thankful with the organizers for providing a perfect working atmosphere.

The first author thanks the Centro de Investigación en Matemática Pura y Aplicada (CIMPA) and the School of Mathematics of the University of Costa Rica for their administrative help and support during this project. This paper was written as part of research project C5139, led by the first author and registered with the Vicerrectoría de Investigación of the University of Costa Rica.

The second author acknowledges NSF grant DMS-2152182 for funding the travel to the above workshop.

The third author would like to thank the Guandong Technion Israel Institute of Technology for the support during his visit, which played a significant role in the completion of this article.

\section{Eta-quotients and Hardy--Ramanujan--Rademacher type series}
\label{sect:etaq}

Let $\eta$ denote Dedekind's eta function, given by
\begin{equation*}
	\eta(q) = q^{1/24} \prod_{n \geq 1} (1-q^n).
\end{equation*}
Let $\mathcal{M}$ be a finite subset of $\ZZ_{>0}$.
Fix a function $\delta : \mathcal{M} \to \ZZ$, and denote $\delta_m = \delta(m)$.
By $\eta^\delta$ we denote the eta-quotient
\begin{equation*}
	\eta^\delta(q) = \prod_m \eta(q^m)^{\delta_m}.
\end{equation*}
Here, and in what follows, $m$ runs through all elements of $\mathcal{M}$; unless otherwise stated.

We denote
\begin{equation}
	\label{eqn:n0}
	c_1 = -\tfrac12\sum_m \delta_m,
	\qquad
	n_0 = -\tfrac1{24} \sum_m m \cdot \delta_m.
\end{equation}
Then, since $q^{n_0} \eta^\delta(q)$ is holomorphic on 
$\set{q \in \mathbb{C}}{\abs{q} < 1}$,
it has a Fourier expansion
\begin{equation}
	\label{eqn:etaq-qexp}
	\eta^\delta(q) = q^{-n_0} \sum_{n \geq 0} a(n) q^n.
\end{equation}
Furthermore, when $n_0 \in \ZZ$ we get that $\eta^\delta(e^{2\pi i z})$ is a
weakly-holomorphic modular form of weight $-c_1$.

\begin{remark}
	\label{rmk:shift}
	Given $\delta$ as above and $m_0 \in \mathbb N$, defining $\delta_0(m m_0) =
	\delta_m(m)$ we get the shifted eta-quotient 
	\begin{equation*}
		\eta^{\delta_0}(q) = q^{-n_0 m_0} \sum_{n \geq 0} b(n) q^n,
	\end{equation*}
	where $b(n) = a(n/m_0)$ if $m_0 \mid n$, and $b(n) = 0$ otherwise.
	In particular, its Fourier coefficients have the same information as those
	of $\eta^\delta$.
\end{remark}

For $n \in \ZZ$ we consider the \emph{HRR coefficients} given, for each integer $k>0$, by
\begin{equation}
	\label{eqn:Akn}
	A_k(n) = 
	\sum_{\substack{0 \leq h<k \\(h, k)=1}}
	\exp\left(-2 \pi i\left(\tfrac{hn}k + 
	\tfrac12 \sum_m \delta_m \,
	s\left(\tfrac{mh}{\gcd(m,k)},\tfrac{k}{\gcd(m,k)}\right)\right)\right)
    .
\end{equation}
Here, given integers $a,b$ with $b>0$ we denote by $s(a,b)$ their Dedekind sum,
given by
\begin{align}
\label{eqn:dedekind_sum}
   s(a,b) = \sum_{j=0}^{b-1} \dparenth{\frac{j}{b}}
   \dparenth{\frac{aj}{b}},
\end{align}
where $\dparenth{x}$ is given by $x - \left\lfloor x \right\rfloor -1/2$ if $x \notin \ZZ$, and by $0$ otherwise.
We remark that for each $k$ the function $A_k$ is $k$-periodic.
Moreover, it is real valued; see Remark~\ref{rmk:dedekind}.

Since $s(a,1) = s(a,2) = 0 $ for every $a$ we get that
\begin{equation}
	\label{eqn:A1A2}
	A_1(n) = 1, \quad A_2(n) = (-1)^n.
\end{equation}

\medskip

The following result by Sussman (\cite{Sus17}) gives a Hardy--Ramanujan--Rademacher ex\-pan\-sion for the Fourier coefficients $a(n)$ given by \eqref{eqn:etaq-qexp}, in the case of negative weight.
Denote by $I_\alpha(x)$ the modified Bessel function of the first kind (see \eqref{eqn:bessel}).
For each integer $k > 0$ denote
\begin{align*}
	c_2(k) = \prod_m \left(\tfrac{\gcd(m,k)}m\right)^{\delta_m/2},
	\quad
	c_3(k) = -\sum_m \delta_m \tfrac{\gcd(m,k)^2}m, \quad
	c_4(k) = \min_m \left\{\tfrac{\gcd(m,k)^2}m\right\} - \tfrac{c_3(k)}{24}.
\end{align*}
When $n_0 \in \ZZ$, the number $-c_3(k)$ is, up to a positive known factor, the vanishing order of the modular form $\eta^\delta$ at the cusp $1/k$. 
\begin{theorem}[Sussman, 2017]
	\label{thm:sussman}
	If $c_1>0$ and $c_4(k) \geq 0$ for every $k$, then for $n>n_0$
	\begin{equation}
		\label{eqn:sussman}
		a(n) = \frac{2\pi}{(24(n-n_0))^{(c_1+1)/2}}
		\sum_{c_3(k)>0} \tfrac1k \,
		c_2(k) \,
		c_3(k)^{(c_1+1)/2} \,
		A_k(n) \,
		I_{c_1+1}\left(\tfrac{\pi}k \sqrt{\tfrac23 c_3(k)(n-n_0)}\right).
	\end{equation}
\end{theorem}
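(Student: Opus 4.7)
The plan is to follow the Rademacher circle method, as adapted to weakly-holomorphic modular forms. Starting from Cauchy's integral formula
\[
a(n) = \frac{1}{2\pi i} \oint_{|q|=r} q^{n-n_0-1} \, \eta^\delta(q) \, dq
\]
for any $0 < r < 1$, one parametrizes the contour via Ford circles tangent to the unit circle at each Farey fraction $h/k$ of order $N$, and lets $N \to \infty$. The substitution $q = \exp(2\pi i (h+iz)/k)$ on the arc near $h/k$ converts the problem into a sum, indexed by coprime pairs $(h,k)$ with $k \leq N$, of integrals with respect to $z$.

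On each such arc I would invoke the modular transformation of every factor $\eta(q^m)$. Applying the $\mathrm{SL}_2(\ZZ)$ matrix that sends the relevant cusp to $i\infty$ and using the transformation law for $\eta$, which carries a Dedekind-sum multiplier, produces three pieces: a phase of the form $\exp\!\bigl(-\pi i \sum_m \delta_m\, s(mh/\gcd(m,k),\, k/\gcd(m,k))\bigr)$, the geometric factor $c_2(k)$, and a power of $z$ reflecting the weight $-c_1$. Combined with the factor $\exp(-2\pi i h n / k)$ from $q^{n-n_0}$ and summed over the valid residues $h$, these phases aggregate to exactly $A_k(n)$ as defined in \eqref{eqn:Akn}.

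Expanding the transformed eta-quotient in its $q$-expansion at the new cusp, the leading exponential has exponent governed by $c_3(k)$. Replacing it by this leading term in those arcs with $c_3(k) > 0$, the resulting integral in $z$ can be recognized as a Hankel-type representation of the modified Bessel function,
\[
I_\nu(y) = \left(\tfrac{y}{2}\right)^{\nu} \frac{1}{2\pi i} \int_{\gamma} t^{-\nu-1} \exp\!\left(t + \tfrac{y^2}{4t}\right) dt,
\]
with $\nu = c_1 + 1$ and $y = \tfrac{\pi}{k}\sqrt{\tfrac{2}{3} c_3(k)(n - n_0)}$. This produces term-by-term exactly the series \eqref{eqn:sussman}.

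The main obstacle, as always in this method, is the error analysis. One must dispose of two sources of error: the sub-leading contributions in the local $q$-expansion at each cusp $1/k$ with $c_3(k) > 0$, and the entire contribution from cusps with $c_3(k) \leq 0$. The hypothesis $c_1 > 0$ (i.e.\ negative weight) is what grants absolute convergence of the series built from the main terms. The hypothesis $c_4(k) \geq 0$, which is precisely the inequality $\min_m \{\gcd(m,k)^2/m\} \geq c_3(k)/24$, is exactly what is needed so that the next exponential occurring in the local $q$-expansion does not outgrow the putative main exponential; this makes the non-principal terms, and the contributions of the non-polar cusps, amenable to the standard Ford-arc estimates and lets them be shown to tend to zero as $N \to \infty$. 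Carrying out these uniform bounds on the Ford arcs, including the delicate treatment at the meeting points of consecutive Farey neighbors, is the technical heart of the argument and is the part of Sussman's work that requires the most care.
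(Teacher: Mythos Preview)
The paper does not prove this theorem at all: it is quoted as a result of Sussman (labelled ``Sussman, 2017'' and cited as \cite{Sus17}), and is used as a black box throughout. So there is no in-paper proof to compare your proposal against.

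That said, your sketch is a faithful outline of the standard Rademacher circle-method argument that underlies Sussman's proof: Farey dissection via Ford circles, the modular transformation of each $\eta(q^m)$ producing the Dedekind-sum phases that assemble into $A_k(n)$ and the factor $c_2(k)$, extraction of the principal part at each cusp giving the Bessel integral, and error control using $c_1>0$ for convergence and $c_4(k)\geq 0$ to suppress the non-principal terms. As a high-level plan this is correct; the substance, as you note, lies entirely in the uniform Ford-arc estimates, which you have not carried out. If you intend this as an actual proof rather than a roadmap, that gap would need to be filled.
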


\begin{remark}
    \label{rmk:c4}
	Let $M = \lcm\{m:m \in \mathcal{M}\}$. Then, since $\gcd{(m, k + Mt)} = \gcd{(m, k)}$ for every $t \in \Z$, we get that the functions $c_2, c_3$ and $c_4$ are
	$M$-periodic. In particular, using this, the hypothesis on the positivity of the values $c_4(k)$ for every $k$ appearing in the statement of Theorem \ref{thm:sussman} can be easily checked.
\end{remark}

\begin{remark}
    When $c_1 \leq 0$ Chern shows in \cite{chern2019} that, changing $I_{c_1+1}$ by $I_{-c_1-1}$, \eqref{eqn:sussman} can be used for estimating $a(n)$.
    More precisely, in this case the series is not convergent; the author gives error bounds for the remainder.
    \end{remark}

\begin{remark}
    Even though not given by eta-quotients, many of the examples of HRR expansions given in \cite{sills2010} involve the numbers $A_k(n)$ in \eqref{eqn:Akn}.
    \end{remark}

Table \ref{tab:examples} gives a list of examples of eta-quotients with Fourier coefficients
$a(n)$ giving well known partition functions.
In the table, the column labeled as $\delta$ lists the ordered pairs $(m, \delta_m)$ for $m \in \mathcal{M}$. 
In the first four the hypotheses of Theorem~\ref{thm:sussman} hold; in the
remaining ones, even though they satisfy that $c_1=0$, the identity given by
\eqref{eqn:sussman} remains valid.
This can be seen in the aforementioned references.

\begin{table}[ht]
\begin{tabular}{c|l|l}
	$\delta$ & Partition function & References
    \\\hline
	$\{(1,-1)\}$ & 			Partitions &  \cite{Leh38,Joh12} \\
	$\{(1,-r)\}$ &			$r$-colored partitions & \cite{iskander2020} \\
	$\{(1,-2),(2,1)\}$ &	Overpartitions & \cite{Zuc39,us2023} \\
	$\{(1,-1), (2,1), (4,-1)\}$ &	Partitions with no odd part repeated & \cite{sills2010}\\
	$\{(1,-1),(2,1)\}$ &	Partitions into odd parts & \cite{hua1942} \\
	$\{(1,-1),(pq,-1),$ &	Partitions into parts prime to $pq$, & \cite{iseki1961}\\
	$(p,1),(q,1)\}$     &	with $p,q$ distinct prime numbers &
\end{tabular}
\caption{Eta-quotients and partition functions}
\label{tab:examples}
\end{table}

All of our results, condensed mostly in the output of Algorithm~\ref{alg:Ak}, have been
thoroughly tested over these examples, with the aid of \texttt{SageMath} (\cite{sagemath}).

\section{Twisted Kloosterman sums}
\label{sect:kloosterman_sums}

Let $k$ be a positive integer.
Given nonzero integers $a,b$ modulo $k$ and a Dirichlet character $\chi$ modulo
$k$ we consider the twisted Kloosterman sum
\begin{equation*}
    S_\chi(a,b;k) = 
	\sum_{\substack{0\leq h<k \\(h, k)=1}} \chi(h) \exp{\left(
	\frac{2\pi i}{k}\left( ah +b\overline{h} \right) \right)}.
\end{equation*}
In such sums we always denote by $\overline{h}$ an inverse of $h$ modulo $k$.
When convenient, the sum can be taken over any set of representatives for $(\ZZ/k\ZZ)^\times$.

The following properties are well known for sums with trivial
character;
those proofs which can be easily extended to the twisted setting will be omitted.

\begin{proposition}
	With the above notation,
	\begin{enumerate}
		\item $S_\chi(a,b;k) = S_{\overline\chi}(b,a;k).$
            \item $\overline{S_\chi(a,b;k)} = S_{\overline\chi}(-a,-b;k).$
		\item If $k = k_1 k_2$ with $\gcd(k_1,k_2) = 1$, then
			\begin{equation}
				\label{eqn:k-mult}
					S_\chi(a,b;k) = 
					S_{\chi_1}\left(a/k_2,b/k_2;k_1\right)
					\cdot
					S_{\chi_2}\left(a/k_1,b/k_1;k_2\right),
			\end{equation}
			where $\chi = \chi_1 \cdot \chi_2$ under the natural isomorphism $(\ZZ/k\ZZ)^\times \simeq (\ZZ/k_1\ZZ)^\times \times (\ZZ/k_2\ZZ)^\times$, and the inverses are understood
			modulo $k_i$.
	\end{enumerate}
\end{proposition}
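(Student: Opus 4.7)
The proposition collects three identities, each extending a classical argument for the untwisted case by carefully tracking the character.

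For part (a), I apply the change of variable $h \mapsto \overline h$, which is a bijection of $(\mathbb{Z}/k\mathbb{Z})^\times$ sending each class to its inverse. Under this substitution $\chi(h)$ becomes $\chi(\overline h) = \overline\chi(h)$, while the exponent $ah + b\overline h$ becomes $a\overline h + bh$, immediately yielding $S_{\overline\chi}(b,a;k)$. For part (b), I take complex conjugates termwise: $\overline{\chi(h)} = \overline\chi(h)$, and the exponent $ah + b\overline h$ gets negated, giving $S_{\overline\chi}(-a,-b;k)$.

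Part (c) is the main content. The plan is to parametrize the sum via the Chinese Remainder Theorem: every $h \in (\mathbb{Z}/k\mathbb{Z})^\times$ corresponds uniquely to a pair $(h_1, h_2) \in (\mathbb{Z}/k_1\mathbb{Z})^\times \times (\mathbb{Z}/k_2\mathbb{Z})^\times$ with $h \equiv h_i \pmod{k_i}$, and the inverse $\overline h$ modulo $k$ reduces to the inverse $\overline{h_i}$ modulo each $k_i$. By the hypothesis on $\chi$, we have $\chi(h) = \chi_1(h_1)\chi_2(h_2)$. The only point requiring a genuine calculation is the exponent. Choosing integers $u,v$ with $u k_1 + v k_2 = 1$, we have the exact identity
\[
\frac{ah + b\overline h}{k_1 k_2} = \frac{v(ah + b\overline h)}{k_1} + \frac{u(ah + b\overline h)}{k_2}.
\]
Modulo $1$, the first summand depends only on $h_1$ and equals $\bigl((av) h_1 + (bv) \overline{h_1}\bigr)/k_1$; since $v \equiv k_2^{-1} \pmod{k_1}$, the coefficients $av$ and $bv$ reduce modulo $k_1$ to the classes denoted $a/k_2$ and $b/k_2$ in the statement. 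The analogous reduction modulo $k_2$ produces $a/k_1$ and $b/k_1$. Hence the exponential factors as a product of two exponentials, one depending only on $h_1$ and the other only on $h_2$, and the double sum over $(h_1, h_2)$ separates into the product asserted in \eqref{eqn:k-mult}.

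The only genuine obstacle is notational bookkeeping: confirming that the compact symbol $a/k_j$ used in the statement matches the integer class produced by the partial fraction decomposition, and that the ambient inverse $\overline h$ agrees with the componentwise inverses $\overline{h_i}$ after CRT reduction. Once these identifications are verified, the argument is purely mechanical.
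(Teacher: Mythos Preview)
Your proof is correct; the paper itself omits the argument entirely, remarking only that these properties are well known in the untwisted case and that the extension to the twisted setting is straightforward. Your sketch supplies exactly that straightforward extension, and the partial-fraction treatment of the exponent in part (c) is the standard way to carry out the CRT splitting.
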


\begin{proposition}
	Given a nonzero integer $c$ with $\gcd(c,k) = 1$, we have that
	\begin{equation}
		\label{eqn:kloosterman_homog}
        {\chi}(c) \,
		S_\chi(ac,b;k) =
        S_\chi(a,bc;k).
	\end{equation}
\end{proposition}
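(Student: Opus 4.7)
The plan is to prove the identity by a simple substitution in the sum defining $S_\chi(ac,b;k)$, exploiting the fact that multiplication by a unit permutes $(\ZZ/k\ZZ)^\times$ and interacts nicely with taking inverses.

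More precisely, write
\begin{equation*}
    S_\chi(ac,b;k) = \sum_{\substack{0 \leq h < k \\ (h,k)=1}} \chi(h) \exp\left(\frac{2\pi i}{k}(ach + b\bar h)\right),
\end{equation*}
and, letting $\bar c$ denote an inverse of $c$ modulo $k$ (which exists since $\gcd(c,k)=1$), perform the change of variables $h \mapsto \bar c h$. Since the sum can be taken over any set of representatives for $(\ZZ/k\ZZ)^\times$, this is a legitimate reindexing. Under this substitution the argument of the exponential becomes $a h + bc \bar h$, because $ac \cdot \bar c h = ah$ and $\overline{\bar c h} = c \bar h$, so $b \overline{\bar c h} = bc \bar h$. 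Meanwhile, the multiplicativity of $\chi$ gives $\chi(\bar c h) = \chi(\bar c)\chi(h) = \overline{\chi(c)}\,\chi(h)$, using that $\chi(c)\chi(\bar c) = \chi(1) = 1$.

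Pulling the constant factor $\overline{\chi(c)}$ out of the sum yields
\begin{equation*}
    S_\chi(ac,b;k) = \overline{\chi(c)} \, S_\chi(a,bc;k).
\end{equation*}
Multiplying both sides by $\chi(c)$ gives the desired identity \eqref{eqn:kloosterman_homog}.

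There is no real obstacle here; the only thing to be careful about is that $\bar h$ in the definition of $S_\chi(ac,b;k)$ denotes the inverse of the summation variable $h$, so after reindexing by $h \mapsto \bar c h$ one must correctly track how the inverse transforms. Everything else is a direct manipulation.
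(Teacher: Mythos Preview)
Your argument is correct and is exactly the standard substitution the paper has in mind; indeed, the paper omits the proof of this proposition, remarking that the well-known untwisted proofs extend straightforwardly to the twisted setting.
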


For the remainder of this section we assume that $k = q = p^\alpha$, with $p$
prime, and $\beta \leq \alpha$ is such that $\chi$ is defined modulo $p^\beta$.
We denote by $\mathbf{1}$ the trivial character modulo $p$.

\medskip

The following is a simpler version of Selberg's identity for twisted 
Kloosterman sums.

\begin{proposition}
	Given integers $a,b$, write $\gcd(a,b,q) = p^\gamma$.
	If $\gamma < \alpha - \val_p(2)$, assume that $\beta \leq \alpha-\gamma$
	and that $\gamma = \val_p(b)$.
	Then
	\begin{equation}
		\label{eqn:selberg}
		S_\chi\left(a,b;p^\alpha\right)=
		\begin{cases}
			0,
			  &\gamma = \alpha, \, \chi \neq \mathbf{1},\\
			p^\alpha - p^{\alpha -1}
			  &\gamma = \alpha, \,\chi = \mathbf{1},\\
            p^\gamma \cdot \chi(b/p^\gamma) \cdot
			S_\chi\left(ab/p^{2\gamma},1;p^{\alpha-\gamma}\right)
			, &\gamma < \alpha - \val_p(2) .
		\end{cases}
	\end{equation}
	Finally, if $p = 2$ and $\gamma = \alpha-1$ then
	\begin{equation}
		\label{eqn:selberg2}
		S_\chi\left(a,b;p^\alpha\right) = 
		\begin{cases}
				0,& \chi \neq \mathbf{1},\\
				2^{\alpha-1} \, (-1)^{(a+b)/2^\gamma}, & \chi = \mathbf{1}.
		\end{cases}
	\end{equation}
        
\end{proposition}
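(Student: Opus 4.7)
The plan is to treat each of the three regimes in the statement separately, exploiting the natural parametrization of $(\ZZ/p^\alpha\ZZ)^\times$ by residues modulo a lower power of $p$ together with the homogeneity property \eqref{eqn:kloosterman_homog}.

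When $\gamma=\alpha$ both $a$ and $b$ vanish modulo $p^\alpha$, so the exponential factor is identically $1$ and $S_\chi(a,b;p^\alpha)$ collapses to $\sum_h\chi(h)$. By orthogonality of characters this equals $\phi(p^\alpha)=p^\alpha-p^{\alpha-1}$ when $\chi=\mathbf{1}$ and vanishes otherwise, yielding the first two cases of \eqref{eqn:selberg}. The exceptional $p=2$, $\gamma=\alpha-1$ regime of \eqref{eqn:selberg2} can also be dispatched directly: writing $a=2^{\alpha-1}a_0$ and $b=2^{\alpha-1}b_0$, the exponential reduces to $(-1)^{a_0 h+b_0\overline h}$, and since $h$ (and hence $\overline h$) is odd, both reduce to $1$ modulo $2$, so the exponential equals the constant $(-1)^{a_0+b_0}=(-1)^{(a+b)/2^\gamma}$. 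What remains is the character sum $\sum_h\chi(h)$, which again yields $2^{\alpha-1}$ or $0$ as claimed.

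The heart of the argument is the generic case $\gamma<\alpha-\val_p(2)$. Writing $a=p^\gamma a_0$ and $b=p^\gamma b_0$, the assumption $\val_p(b)=\gamma$ guarantees $\gcd(b_0,p)=1$. The central observation is the identity
\[
    \exp\!\left(\tfrac{2\pi i(ah+b\overline h)}{p^\alpha}\right)
    = \exp\!\left(\tfrac{2\pi i(a_0 h+b_0\overline h)}{p^{\alpha-\gamma}}\right),
\]
so the summand depends only on the residue class of $h$ modulo $p^{\alpha-\gamma}$; the hypothesis $\beta\leq\alpha-\gamma$ ensures the same is true of $\chi(h)$. Parametrizing $h=h_1+p^{\alpha-\gamma}h_2$ with $h_1\in(\ZZ/p^{\alpha-\gamma}\ZZ)^\times$ and $h_2\in\ZZ/p^\gamma\ZZ$, the inner sum over $h_2$ contributes exactly $p^\gamma$, producing
\[
    S_\chi(a,b;p^\alpha) = p^\gamma\,S_\chi(a_0,b_0;p^{\alpha-\gamma}).
\]
An application of \eqref{eqn:kloosterman_homog} with $c=b_0$ (valid since $\gcd(b_0,p)=1$) then gives $S_\chi(a_0,b_0;p^{\alpha-\gamma})=\chi(b_0)\,S_\chi(a_0 b_0,1;p^{\alpha-\gamma})$, and substituting $a_0 b_0=ab/p^{2\gamma}$ and $b_0=b/p^\gamma$ yields the stated formula.

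The principal subtlety to verify is that if $h\equiv h_1\pmod{p^{\alpha-\gamma}}$ then $\overline h\equiv\overline{h_1}\pmod{p^{\alpha-\gamma}}$, with the latter inverse taken modulo $p^{\alpha-\gamma}$; this is immediate from the fact that reduction $(\ZZ/p^\alpha\ZZ)^\times\to(\ZZ/p^{\alpha-\gamma}\ZZ)^\times$ is a group homomorphism. This also clarifies the role of the restriction $\gamma<\alpha-\val_p(2)$: in the excluded case $p=2$, $\gamma=\alpha-1$ one would be reducing to a sum modulo $2$, a trivial group carrying no nontrivial character, forcing that regime to be separated out and treated by the direct argument above.
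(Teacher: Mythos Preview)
Your proof is correct and follows essentially the same approach as the paper's: in each regime you reduce the exponential to depend only on $h$ modulo $p^{\alpha-\gamma}$, use that reduction modulo $p^{\alpha-\gamma}$ is a surjective group homomorphism with fibres of size $p^\gamma$, and then apply the homogeneity relation \eqref{eqn:kloosterman_homog} to normalize the second argument to $1$. The only cosmetic difference is that the paper handles the $p=2$, $\gamma=\alpha-1$ case as a specialization of the intermediate identity \eqref{eqn:selberg_aux} rather than separately up front, but the computation is the same.
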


\begin{proof}

	If $\gamma = \alpha$ then
	$ah+b\overline{h}=0\in \mathbb{Z}/p^\alpha\mathbb{Z}$ for all
	$h\in\left(\mathbb{Z}/p^\alpha\mathbb{Z}\right)^\times$;
	therefore\[S_\chi\left(a,b;p^\alpha\right)=\sum_{h\in\left(\mathbb{Z}/p^\alpha\mathbb{Z}\right)^\times}\chi(h)
    \exp\left(2\pi i(ah+b\overline{h})/p^\alpha\right)
    =\sum_{h\in\left(\mathbb{Z}/p^\alpha\mathbb{Z}\right)^\times}\chi\left(h\right),\]which
	proves \eqref{eqn:selberg} in this case.
	On the other hand, if $\gamma<\alpha$ then, writing $a=p^\gamma m$ and
	$b=p^\gamma n$ with $m,n\in\mathbb{Z}$ we get
	\begin{equation}
		\label{eqn:selberg_aux}
		S_\chi\left(a,b;p^\alpha\right)=
		\sum_{h\in\left(\mathbb{Z}/p^\alpha\mathbb{Z}\right)^\times}\chi\left(h\right)
		\exp\left(2\pi i(mh+n\overline{h})/p^{\alpha-\gamma}\right)
		.
	\end{equation}
	Each exponential
    depends only on the class
	of $mh+n\overline{h}$ modulo $p^{\alpha-\gamma}$, and $\chi(h)$ depends only on
	the class of $h$ modulo $p^{\alpha-\gamma}$, by the hypothesis on $\beta$.
	Therefore, since reduction modulo
	$p^{\alpha-\gamma}$ induces an epimorphism
	$\left(\mathbb{Z}/p^\alpha\mathbb{Z}\right)^\times\to
	\left(\mathbb{Z}/p^{\alpha-\gamma}\mathbb{Z}\right)^\times$ of index $p^\gamma$,
	we get that
	\[
		S_\chi(a,b;p^\alpha) = p^\gamma S_\chi\left(m,n;p^{\alpha-\gamma}\right).
	\]
	Then \eqref{eqn:selberg} follows from \eqref{eqn:kloosterman_homog}.

	Finally, when $p=2$ and $\gamma = \alpha - 1$, \eqref{eqn:selberg2} follows
	from \eqref{eqn:selberg_aux},
	since for odd $h$ we have
	\[
		\exp\left(2\pi i(mh+n\overline{h})/2\right)
		=
		\exp\left(\pi i(m+n)\right)
		=
		(-1)^{m+n}.
		\qedhere
	\]
\end{proof}

The above results reduce the computation of any twisted Kloosterman sum to the
computation of sums of the form $S_\chi(a,1;q)$.
We start with a well known case.

\begin{proposition}
    Let $a$ be an integer.
    Assume $p \mid a$.
    Then $S_\mathbf{1}\left(a,1;p\right) = -1$.
\end{proposition}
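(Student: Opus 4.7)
The plan is to directly unwind the definition. Writing out
\[
	S_\mathbf{1}(a,1;p) =
	\sum_{\substack{0 \leq h < p \\ (h,p)=1}}
	\exp\!\left(\tfrac{2\pi i}{p}(ah + \overline{h})\right),
\]
I would first use the hypothesis $p \mid a$ to observe that $ah \equiv 0 \pmod{p}$ for every $h$, so the exponential collapses to $\exp(2\pi i \overline{h}/p)$.

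Next I would note that inversion is a bijection on $(\Z/p\Z)^\times$, so reindexing by $h \mapsto \overline h$ turns the sum into $\sum_{h=1}^{p-1} \exp(2\pi i h/p)$. This is a sum of all nontrivial $p$-th roots of unity, which equals $-1$ since $\sum_{h=0}^{p-1} \exp(2\pi i h/p) = 0$.

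There is no real obstacle; the only thing to verify is that the change of variable is well defined (which it is because $\gcd(\overline h, p) = 1$). Alternatively, one can view this as a special degenerate instance of \eqref{eqn:selberg} in the borderline case $\gamma = \alpha = 1$ with $\chi = \mathbf 1$, but when $\alpha = 1$ the third case of \eqref{eqn:selberg} would require $\gamma < 1 - \val_p(2)$, which fails for $p = 2$, so the direct calculation above is the cleanest approach and covers all primes uniformly.
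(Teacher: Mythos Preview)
Your direct computation is correct and is exactly the standard argument: the paper in fact omits the proof entirely, labeling this a ``well known case,'' so your write-up supplies what the paper leaves implicit.

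One small correction to your closing aside: in the Selberg reduction \eqref{eqn:selberg} the parameter $\gamma$ is defined by $p^\gamma = \gcd(a,b,q)$, and here $b = 1$, so $\gamma = 0$ regardless of $a$. Thus this is not the ``borderline case $\gamma = \alpha = 1$''; rather, the third line of \eqref{eqn:selberg} applies (for odd $p$) with $\gamma = 0$ and is vacuous, reducing $S_\mathbf{1}(a,1;p)$ to itself. So \eqref{eqn:selberg} genuinely gives no information here, and your direct calculation is not merely the cleanest route but the only one available from the results stated. This does not affect the validity of your proof.
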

    
\begin{remark}
\label{rmk:satotate}
There does not exist a simple formula
for computing $S_\mathbf{1}(a,1;p)$ when $p\nmid a$.
\end{remark}

In what follows, for odd $p$ we denote
\begin{equation*}
    \epsilon_q = \begin{cases}
    1, & q\equiv 1 \pmod{4},\\
    i, & q\equiv 3 \pmod{4},
    \end{cases}
\end{equation*}
and we let $\krop=\legendre{\cdot}{p}$, the Legendre symbol modulo $p$.

\begin{proposition}
    \label{prop:acuadradop}
	Assume that $p > 2$. Let $a$ be an integer.
	\begin{enumerate}
		\item Assume $p \mid a$. Then
			$S_\krop\left(a,1;p\right)=\epsilon_p\sqrt{p}$.
		\item Assume $p \nmid a$. Then
			$S_\krop(a,1;p) = 0$, unless there exists $c$ such that
			$a \equiv c^2 \pmod{p}$.
	\end{enumerate}
 \end{proposition}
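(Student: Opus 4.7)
The plan is to handle the two items with distinct techniques. For item (a), the assumption $p \mid a$ makes the $ah$ part of the exponential vanish modulo $p$, reducing
\[
	S_\krop(a,1;p) = \sum_{h} \legendre{h}{p} \exp\bigl(2\pi i \overline{h}/p\bigr).
\]
Performing the bijective substitution $k = \overline{h}$ on $(\Z/p\Z)^\times$ (which preserves the Legendre symbol, since $\legendre{\overline{k}}{p} = \legendre{k}{p}$) turns this into the classical quadratic Gauss sum $\sum_{k=1}^{p-1} \legendre{k}{p} \exp(2\pi i k/p)$, whose value $\epsilon_p \sqrt{p}$ is a standard fact for odd primes $p$. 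I will simply cite this classical evaluation.

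For item (b), the strategy is to exhibit a sign-reversing, fixed-point-free involution on the index set $(\Z/p\Z)^\times$, so that the sum collapses pairwise to zero. I propose $\sigma(h) = \overline{ah}$ and verify four one-line facts. First, $\sigma$ is an involution, since $a \cdot \overline{ah} \equiv \overline{h} \pmod{p}$ gives $\sigma(\sigma(h)) = \overline{\overline{h}} = h$. Second, $\sigma$ has no fixed points under the hypothesis: $\sigma(h) = h$ forces $ah^2 \equiv 1 \pmod{p}$, equivalently $h^2 \equiv \overline{a} \pmod{p}$, which has no solutions when $a$ (equivalently $\overline{a}$) is not a square. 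Third, the exponent is $\sigma$-invariant:
\[
	a\, \sigma(h) + \overline{\sigma(h)} = a\,\overline{ah} + ah = \overline{h} + ah.
\]
Fourth, the Legendre factor flips sign: $\legendre{\sigma(h)}{p} = \legendre{\overline{ah}}{p} = \legendre{a}{p}\legendre{h}{p} = -\legendre{h}{p}$, using the hypothesis $\legendre{a}{p} = -1$. Hence the contributions from $h$ and $\sigma(h)$ cancel, and therefore $S_\krop(a,1;p) = 0$.

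No significant obstacle is anticipated: both parts rely only on routine verifications once the substitution in (a) and the involution $\sigma$ in (b) are written down. The only creative moment is identifying the correct involution for (b); everything else reduces to standard symbol manipulation and the classical Gauss sum evaluation.
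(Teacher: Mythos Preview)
Your proof is correct. Both parts are handled cleanly: in (a) the reduction to the quadratic Gauss sum is immediate and the cited evaluation is standard; in (b) the involution $\sigma(h)=\overline{ah}$ does exactly what you claim, and your four verifications are all valid. (Your argument in (b) is equivalent to the one-line substitution $h\mapsto\overline{ah}$ in the sum, which yields $S_\krop(a,1;p)=\legendre{a}{p}\,S_\krop(a,1;p)$ and hence vanishing when $\legendre{a}{p}=-1$.)

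The paper does not give its own proof here; it simply cites \cite[Lem.* 2]{Leh38}. So your write-up supplies a self-contained argument where the paper defers to Lehmer. The content is the same classical computation; you have just made it explicit.
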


\begin{proof}
See \cite[Lem.* 2]{Leh38}.
\end{proof}

\begin{proposition}
    \label{prop:acuadrado}
	Assume that $\frac12\alpha \leq \beta <\alpha$, and that $\alpha > 2 + \beta$ if $p = 2$.
	Then $T_\chi(a,1;q) = 0$, unless $p \nmid a$ and there exists $c$ such that
	$a \equiv c^2 \pmod{q}$.
\end{proposition}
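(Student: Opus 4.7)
The plan is to use a standard refinement argument for Kloosterman sums on prime-power moduli, splitting the summation variable at the scale $p^\mu$ with $\mu = \beta$. The hypothesis $\tfrac12\alpha \leq \beta < \alpha$ guarantees both $2\mu \geq \alpha$ and $\mu \leq \alpha-1$, which are exactly the two inequalities needed below. First I would invoke the standard fact that every $h \in (\mathbb{Z}/p^\alpha\mathbb{Z})^\times$ can be written uniquely as $h = h_0(1 + p^\mu u)$, with $h_0$ running over a fixed system of lifts of $(\mathbb{Z}/p^\mu\mathbb{Z})^\times$ and $u \in \mathbb{Z}/p^{\alpha-\mu}\mathbb{Z}$; this comes from the surjection $(\mathbb{Z}/p^\alpha\mathbb{Z})^\times \to (\mathbb{Z}/p^\mu\mathbb{Z})^\times$ having kernel $1 + p^\mu(\mathbb{Z}/p^\alpha\mathbb{Z})$. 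Since $\chi$ has conductor dividing $p^\beta = p^\mu$, it is trivial on this kernel, so $\chi(h) = \chi(h_0)$.

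Next I would expand $\overline{h} \equiv \overline{h_0}(1 - p^\mu u) \pmod{p^\alpha}$, using that $(1+p^\mu u)(1-p^\mu u) = 1 - p^{2\mu}u^2 \equiv 1 \pmod{p^{2\mu}}$ together with $2\mu \geq \alpha$. Substituting gives
\[
a h + \overline{h} \equiv (a h_0 + \overline{h_0}) + p^\mu u\,(a h_0 - \overline{h_0}) \pmod{p^\alpha},
\]
so $S_\chi(a,1;q)$ factors as an outer sum over $h_0$ of $\chi(h_0)\exp\!\left(2\pi i(a h_0 + \overline{h_0})/p^\alpha\right)$ times an inner additive exponential sum over $u \in \mathbb{Z}/p^{\alpha-\mu}\mathbb{Z}$ with frequency $a h_0 - \overline{h_0}$. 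The inner sum is a complete additive sum and vanishes unless $a h_0 \equiv \overline{h_0} \pmod{p^{\alpha-\mu}}$, equivalently $a h_0^2 \equiv 1 \pmod{p^{\alpha-\mu}}$. Since $\alpha - \mu \geq 1$, this congruence forces $p \nmid a$ and exhibits $a$ as a square modulo $p^{\alpha-\mu}$.

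Finally I would promote the congruence $a \equiv \overline{h_0}^2 \pmod{p^{\alpha-\mu}}$ to $a$ being a square modulo $q = p^\alpha$. For odd $p$ this is immediate from Hensel's lemma, since a unit modulo $p^{\alpha-\mu}$ (with $\alpha-\mu \geq 1$) is a square iff it is a square modulo $p^\alpha$. For $p = 2$ the strengthened hypothesis $\alpha > 2 + \beta$ yields $\alpha - \mu \geq 3$, so $a \equiv 1 \pmod 8$, which is the standard criterion for a unit modulo $2^\alpha$ to be a square. The main point of care, and the place where all three hypotheses on $(\alpha, \beta, p)$ work together most delicately, is precisely this $p = 2$ endgame; the rest of the argument is a bookkeeping exercise to justify the unique parametrization of $(\mathbb{Z}/p^\alpha\mathbb{Z})^\times$ and to expand $\overline{h}$ to the required precision modulo $p^\alpha$.
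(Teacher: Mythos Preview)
Your proof is correct and follows essentially the same strategy as the paper: both split the summation variable at scale $p^\beta$, reduce to a complete additive inner sum that detects the congruence $a \equiv (\text{square}) \pmod{p^{\alpha-\beta}}$, and then lift via Hensel (with the extra condition $\alpha-\beta\geq 3$ when $p=2$). The only cosmetic difference is that the paper invokes Estermann's identity directly and parametrizes additively as $r = s + p^\beta t$, whereas you parametrize multiplicatively as $h = h_0(1+p^\beta u)$; the resulting inner sums and vanishing criteria coincide.
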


\begin{remark}
	When $\chi$ is not primitive modulo $\alpha$, the result holds letting
	$\beta = \alpha - 1$ when $p > 2$ and $\alpha > 1$, and $\beta = \alpha - 3$
	when $p = 2$ and $\alpha > 5$.
\end{remark}

\begin{proof}
	We consider the restricted case; the unrestricted case follows similarly.

    Using the identity \cite[(7)]{estermann1961} with
    $f(r) = \chi(r) \exp(2\pi i (ar + \overline{r})/p^\alpha)$
    (which requires the first hypothesis),
    and denoting $\gamma = \alpha - \beta$,
    we get that
    \begin{equation}
		\label{eqn:estermann}
        S_\chi(a,1;q) =
        \sum_{\substack{s=0 \\p\nmid s}}^{p^\beta-1}
        \chi(s)
        \exp\left(2\pi i(as + \overline{s})/p^\alpha\right)
        \sum_{t=0}^{p^\gamma-1}
        \exp\left(2\pi i(a - \overline{s}^2)t/p^\gamma\right).
    \end{equation}
    Then the result follows from the fact that the inner sum equals $p^\gamma$ or $0$,
    according as $a-\overline{s}^2$ is or not divisible by $p^\gamma$;
    by Hensel's lemma, this is equivalent to the existence of $c$ as in the statement.
\end{proof}

When $p = 2$ the following situations are not covered by the above proposition,
so we treat them separately.

\begin{proposition}
	\label{prop:salie2a}
	Assume that $p = 2$.
	Denote $\omega = \exp(2\pi i (a+1)/q)$.
	\begin{enumerate}
		\item If $\alpha = 1$ then
				$
				S_\chi(a,1;q) = \omega.
				$
		\item If $\alpha = 2$ then
				$
				S_\chi(a,1;q) = \left(1+(-1)^{a+1}\chi(3)\right) \cdot \omega.
				$
		\item If $\alpha = 3$ then
				$
				S_\chi(a,1;q) = 
				\left(1 + i^{a+1}\chi(3) + (-1)^{a+1}\chi(5) + (-i)^{a+1}\chi(7)\right)
				\cdot \omega
				$
		\item If $\alpha = 4$ and $\beta = 3$ then
				$S_\chi(a,1;q) = 0$ if $a \not \equiv 1 \pmod 2$; otherwise
			\[
				S_\chi(a,1;q) =
				\begin{cases}
					2\left(1-(-1)^m\chi(3)-\chi(5)+(-1)^m\chi(7)\right) \cdot \omega,
					   & a = 4m-1,\\
					2\left(1-i(-1)^m\chi(3)+\chi(5)-i(-1)^m\chi(7)\right) \cdot \omega,
					   & a = 4m+1.
				\end{cases}
			\]
	  	\item If $\alpha = 5$ and $\beta = 3$ then
				$S_\chi(a,1;q) = 0$ if $a \not \equiv 1 \pmod 4$; otherwise
			\[
				S_\chi(a,1;q) =
				\begin{cases}
					4\left(1+\sqrt{i}(-1)^m\chi(3)+\chi(5)-i\sqrt{i}^3(-1)^m\chi(7)\right) \cdot \omega,
					   & a = 8m-3,\\
					   4\left(1+i\sqrt{i}(-1)^m\chi(3)-\chi(5)-\sqrt{i}^3(-1)^m\chi(7)\right) \cdot \omega,
					   & a = 8m+1.
				\end{cases}
			\]
	\end{enumerate}
\end{proposition}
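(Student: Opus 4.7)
I would split the statement by the value of $\alpha$. For $\alpha\in\{1,2,3\}$ the group $(\mathbb{Z}/2^\alpha\mathbb{Z})^\times$ has exponent at most $2$, so every odd residue is self-inverse modulo $2^\alpha$; consequently $ah+\overline{h} \equiv (a+1)h \pmod{2^\alpha}$. Setting $\omega = \exp(2\pi i(a+1)/q)$, the Kloosterman sum collapses to
\[
S_\chi(a,1;q) = \sum_h \chi(h)\,\omega^h,
\]
where $h$ runs over at most four residues. Using $\omega^{2^{\alpha-1}} = (-1)^{a+1}$ (and, when $\alpha=3$, $\omega^{2} = i^{a+1}$) and factoring $\omega$ out front, a short enumeration of $h\in\{1,3,5,7\}$ yields (a), (b), (c).

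For the remaining cases $\alpha\in\{4,5\}$ with $\beta=3$, the character $\chi$ factors through $(\mathbb{Z}/8\mathbb{Z})^\times$. I would parametrize each unit modulo $2^\alpha$ as $h=s+8t$ with $s\in\{1,3,5,7\}$ and $t\in\{0,\dots,2^{\alpha-3}-1\}$, and compute $\overline{h}$ modulo $2^\alpha$ using the truncation $(1+8t\overline{s})^{-1}\equiv 1-8t\overline{s} \pmod{2^\alpha}$, which is valid because $2\cdot 3\geq\alpha$. Here $\overline{s}$ denotes the inverse of $s$ modulo $2^\alpha$. Together with $\overline{s}^2\equiv 1\pmod 4$ this produces $\overline{h}\equiv \overline{s}-8t \pmod{2^\alpha}$, and hence
\[
ah+\overline{h} \equiv as+\overline{s}+8t(a-1)\pmod{2^\alpha}.
\]
The sum then factors as a product of a sum over $s$ and a sum over $t$; the $t$-sum equals $2^{\alpha-3}$ if $a\equiv 1\pmod{2^{\alpha-3}}$ and vanishes otherwise, producing the vanishing clauses in (d) and (e).

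When the $t$-sum does not vanish, I would factor out $\omega = \exp(2\pi i(a+1)/2^\alpha)$ and simplify $(as+\overline{s}-(a+1))/2^\alpha$ for each $s$ using the explicit inverses $\overline{s}\in\{1,11,13,7\}$ modulo $16$, or $\overline{s}\in\{1,11,13,23\}$ modulo $32$. Evaluating the resulting $16$th and $32$nd roots of unity separately on the residue classes $a=4m-1, 4m+1$ (for $\alpha=4$) or $a=8m-3, 8m+1$ (for $\alpha=5$) produces the stated expressions. The main obstacle is purely bookkeeping: one must keep track of these inverses and simplify several roots of unity to closed forms involving $\sqrt{i}=e^{i\pi/4}$; no new conceptual ingredient is needed beyond the truncation identity and the congruence $\overline{s}^2\equiv 1\pmod 4$.
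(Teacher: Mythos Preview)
Your proposal is correct and follows essentially the same route as the paper. For $\alpha\le 3$ the paper simply says ``the other cases are simpler,'' and your self-inverse observation is exactly the right shortcut; for $\alpha\in\{4,5\}$ the paper invokes the Estermann identity \eqref{eqn:estermann} (with $\beta=3$, $\gamma=\alpha-3$), which is precisely your parametrization $h=s+8t$ together with the first-order expansion of $\overline{h}$, so the two arguments differ only in that you re-derive this special case of \eqref{eqn:estermann} by hand rather than quoting it.
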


\begin{proof}
	We consider the case when $\alpha = 5$; the case when
	$\alpha = 4$ follows similarly, and the other cases are
	simpler.

	Since $\tfrac12 \leq \beta < \alpha$ the formula in \eqref{eqn:estermann}
	holds (with $\gamma = 2$).
	Since $\overline{s}^2 \equiv 1 \tmod 4$ when $2\nmid s$,
	as in the above proof, the inner sum in that formula (and hence
	$S_\chi(a,1;q)$) equals $0$ if $a \not \equiv 1 \tmod 4$; otherwise, 
	\begin{align*}
		S_\chi(a,1;q) & = 4
			\left(
				\chi(1) \omega - \chi(3) \omega^3 - \chi(5) \omega^5  + \chi(7) \omega^7
				   \right),
	\end{align*}
	from which the claim follows straightforwardly.
\end{proof}

We resume from Proposition~\ref{prop:acuadrado}.
When $a \equiv c^2 \tmod{q}$ with $p \nmid c$,
using \eqref{eqn:kloosterman_homog} we get that
\begin{equation*}
	S_\chi(a,1;q) =
	\overline{\chi}(u) \, S_\chi(u,u;q), 
\end{equation*}
where $u = c$ in the unrestricted case, and $u = (-1)^{\tfrac{c-1}2} c$ otherwise.
Using this, the following formulas will allow us to compute $S_\chi(a,1;q)$ in
the cases that we are interested in and that were not covered above.

\medskip

The following two results are due to Salié (see \cite{williams1971}).

\begin{proposition}
	\label{prop:s1-explicit}
	Assume that $p>2$ and $\alpha \geq 2$.
	Let $u$ be an integer with $p \nmid u$.
	Then
    \begin{equation*}
        S_\mathbf{1}(u,u;q) =
        \legendre{u}{q} 2\sqrt{q} \, \op{Re}\left(\epsilon_q \, e^{4\pi iu/q}\right).
    \end{equation*}
\end{proposition}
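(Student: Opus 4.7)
The plan is to apply the Estermann-type reduction used in the proof of Proposition~\ref{prop:acuadrado} directly to $S_\mathbf{1}(u,u;q)$. Although $\mathbf{1}$ is not primitive modulo $p^\alpha$, it is defined modulo $p^{\alpha-1}$, and by the remark following that proposition we may take $\beta = \alpha - 1$, $\gamma = 1$. Concretely, splitting $h = h_0 + h_1 p^{\alpha-1}$ with $h_0 \in [0, p^{\alpha-1})$ coprime to $p$ and $h_1 \in [0, p)$, one computes
\[
u(h + \overline h) \equiv u(h_0 + \overline{h_0}) + u h_1 p^{\alpha-1}\left(1 - \overline{h_0}^2\right) \pmod{p^\alpha},
\]
so the sum over $h_1$ forces $\overline{h_0}^2 \equiv 1 \pmod p$, i.e., $h_0 \equiv \pm 1 \pmod p$. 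Pairing the terms $h \equiv 1 \pmod p$ and $h \equiv -1 \pmod p$ via the involution $h \mapsto -h$, which satisfies $(-h) + \overline{(-h)} = -(h + \overline h)$ and hence conjugates the exponential, yields $S_\mathbf{1}(u,u;q) = 2\,\op{Re}(\Sigma)$, where $\Sigma$ sums only over $h = 1 + pv$, $v \in [0, p^{\alpha-1})$.

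Using the elementary identity $(1+pv) + (1+pv)^{-1} = 2 + (pv)^2(1+pv)^{-1}$, the constant term factors out as $e^{4\pi i u/p^\alpha}$, giving
\[
\Sigma = e^{4\pi i u/p^\alpha} \cdot T, \qquad T := \sum_{v=0}^{p^{\alpha-1}-1} e^{2\pi i u v^2 \overline{(1+pv)}/p^{\alpha-2}}.
\]
To evaluate $T$, I invoke Hensel's lemma (valid since $p$ is odd): there is a unique $w \equiv 1 \pmod p$ with $w^2 \equiv 1 + pv \pmod{p^{\alpha-1}}$, and the map $v \mapsto v' := v\overline w$ is a bijection of $\mathbb{Z}/p^{\alpha-1}\mathbb{Z}$, with inverse obtained by solving $w^2 - pv' w - 1 = 0$ again by Hensel. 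Under this substitution $v^2 \overline{(1+pv)} \equiv v'^2$; since the resulting exponential depends on $v'$ only modulo $p^{\alpha-2}$, $T$ collapses to $p$ times a classical quadratic Gauss sum, giving
\[
T = p \cdot \legendre{u}{p^{\alpha-2}} \epsilon_{p^{\alpha-2}} \, p^{(\alpha-2)/2} = \legendre{u}{q} \, \epsilon_q \, \sqrt q,
\]
where the last equality uses $\legendre{u}{p}^2 = 1$ and $p^2 \equiv 1 \pmod 4$ to identify $\legendre{u}{p^{\alpha-2}}$ with $\legendre{u}{q}$ and $\epsilon_{p^{\alpha-2}}$ with $\epsilon_q$.

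Assembling the pieces gives $S_\mathbf{1}(u,u;q) = 2\,\op{Re}(e^{4\pi i u/q} T) = \legendre{u}{q} \cdot 2\sqrt q \cdot \op{Re}(\epsilon_q \, e^{4\pi i u/q})$, as claimed. The step I expect to be the main obstacle is verifying the bijectivity of $v \mapsto v'$ rigorously, beyond the formal level at which its derivative at $v=0$ is $\equiv 1 \pmod p$. An alternative route that avoids Hensel altogether is to evaluate $T$ by induction on $\alpha$: split $v = v_0 + p v_1$, observe that the $v_1$-sum forces $v_0 \equiv 0 \pmod p$, and thereby lower $\alpha$ by two and iteratively extract the quadratic Gauss sum from the base cases $\alpha = 2, 3$. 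Either formulation realizes the classical Salié identity (cf.~\cite{williams1971}).
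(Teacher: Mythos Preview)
Your argument is correct. The Estermann splitting with $\beta=\alpha-1$ is valid, the pairing $h\leftrightarrow -h$ reduces to $\Sigma$ over $h\equiv 1\pmod p$, the identity $(1+pv)+(1+pv)^{-1}=2+(pv)^2(1+pv)^{-1}$ extracts the phase $e^{4\pi i u/q}$, and the Hensel substitution $v\mapsto v'=v\overline{w}$ with $w^2\equiv 1+pv$, $w\equiv 1\pmod p$, is a genuine bijection of $\mathbb{Z}/p^{\alpha-1}\mathbb{Z}$: the inverse you describe via the quadratic $w^2-pv'w-1=0$ works because $f'(w)=2w-pv'\equiv 2\pmod p$ is a unit, so Hensel determines the root $\equiv 1\pmod p$ uniquely at each precision, and one checks that the two constructions undo each other modulo $p^{\alpha-1}$. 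After the substitution $T$ becomes $p$ copies of the quadratic Gauss sum modulo $p^{\alpha-2}$, and your parity bookkeeping for $\tlegendre{u}{p^{\alpha-2}}=\tlegendre{u}{q}$ and $\epsilon_{p^{\alpha-2}}=\epsilon_q$ is right.

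The paper does not prove this proposition; it cites Sali\'e (via \cite{williams1971}). That approach, visible in the paper's proof of Proposition~\ref{prop:salie2b}, splits instead at the midpoint $\delta=\lfloor\alpha/2\rfloor$: writing $h=s(1+tp^\gamma)$ one finds that the $t$-sum forces $s^2\equiv 1\pmod{p^\delta}$, so for odd $p$ only $s\equiv\pm1$ survive and the formula drops out immediately, with no further substitution or induction needed. Your route trades the clean midpoint reduction for a single Estermann step plus a nontrivial change of variables; it is longer but has the virtue of reusing exactly the machinery already set up for Proposition~\ref{prop:acuadrado}. The inductive alternative you sketch (split $v=v_0+pv_1$, lower $\alpha$ by two) is in fact closer in spirit to the Sali\'e--Williams argument and would shorten the write-up.
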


\begin{proposition}
	Assume that $p>2$.
	Let $u$ be an integer with $p \nmid u$.
    Then
    \[S_\krop\left(u,u;q\right)=
            \delta_q
            \epsilon_q
            \legendre{u}{q} 2\sqrt{q} \, \op{Re}\left(\delta_q\, e^{4\pi iu/q}\right)
            ,
            \]
    where we denote
    \begin{equation*}
        \delta_q =
        \begin{cases}
        1, & pq\equiv 1 \pmod{4},\\
        i, & pq\equiv 3 \pmod{4}.
        \end{cases}
    \end{equation*}
    
\end{proposition}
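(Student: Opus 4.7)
The strategy is to mirror the derivation of the untwisted analogue (Proposition~\ref{prop:s1-explicit}), tracking the additional $\delta_q$ factors that the Legendre-symbol twist introduces. First, by the homogeneity relation \eqref{eqn:kloosterman_homog},
\[
S_\krop(u,u;q) \;=\; \legendre{u}{q}\, S_\krop(u^2,1;q),
\]
so it suffices to evaluate $S_\krop(u^2,1;q)$. I would then apply an Estermann-type splitting as in the proof of Proposition~\ref{prop:acuadrado}, taking $\beta = \alpha-1$, which is legitimate for the non-primitive character $\krop$ by the remark immediately after that proposition. This yields
\[
S_\krop(u^2,1;q) \;=\; \sum_{\substack{s \bmod p^{\alpha-1} \\ p\, \nmid\, s}} \legendre{s}{p}\, e^{2\pi i(u^2 s + \overline{s})/p^\alpha} \sum_{t \bmod p} e^{2\pi i(u^2 - \overline{s}^2)t/p}.
\]
The inner sum over $t$ vanishes unless $\overline{s}^2 \equiv u^2 \pmod{p}$, i.e.\ $s \equiv \pm u \pmod p$; by Hensel's lemma, each of these two congruences lifts uniquely to a single residue class modulo $p^{\alpha-1}$, so only those two classes contribute.

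Next, I would evaluate each of the two surviving branches by writing $s = \pm u + p r$ and completing the square in the exponent $u^2 s + \overline{s}$. This reduces the outer sum to a classical quadratic Gauss sum twisted by $\krop$, whose closed-form evaluation produces $\delta_q \sqrt{p}$ in place of the $\epsilon_q \sqrt{p}$ that appears in the untwisted case of Proposition~\ref{prop:s1-explicit}. Pairing the contributions from $s \equiv u$ and $s \equiv -u$ produces the factor $2\op{Re}(\delta_q\, e^{4\pi iu/q})$, and combining with the $\legendre{u}{q}$ from the homogeneity step and the $\epsilon_q$ left over from the Gauss-sum evaluation yields the asserted global prefactor $\delta_q \epsilon_q \legendre{u}{q}$.

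The main obstacle will be the precise bookkeeping of the fourth-root-of-unity factors $\epsilon_q$, $\delta_q$, $\legendre{-1}{p}^{\alpha}$ (which measures the relative sign between the two $\pm u$ branches), and $\legendre{u}{q}$: these must combine correctly into the single compact expression in the statement. The base case $\alpha = 1$ reduces to the classical Salié formula for the prime modulus and can be used to calibrate the sign conventions before iterating.
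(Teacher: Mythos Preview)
The paper does not give a proof of this proposition; it attributes the result to Salié and refers to Williams \cite{williams1971}. The argument there (and the one the paper itself adapts in the proof of Proposition~\ref{prop:salie2b}) uses the splitting parameter $\delta = \lfloor\alpha/2\rfloor$: the Estermann-type identity then reduces $S_\krop(u,u;q)$ to a sum over those $v$ with $v^2 \equiv 1 \pmod{p^\delta}$ and $0 < v < p^{\alpha-\delta}$, which for odd $p$ and $\alpha \geq 2$ means $v \equiv \pm 1 \pmod{p^\delta}$. When $\alpha$ is even this gives exactly two terms; when $\alpha$ is odd, a single quadratic Gauss sum of length $p$ remains in each branch and produces the factor $\delta_q\epsilon_q$.

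Your outline has a concrete gap at the Hensel step. With $\beta = \alpha - 1$ and hence $\gamma = 1$, the inner sum over $t$ imposes the constraint $\overline{s}^2 \equiv u^2$ only modulo $p$, so each of the two branches $s \equiv \pm u^{-1}\pmod{p}$ (note: $\pm u^{-1}$, not $\pm u$) contains $p^{\alpha-2}$ residues in the range $0 \leq s < p^{\alpha-1}$, not one; there is nothing for Hensel to lift, since what survives is simply a mod-$p$ restriction on $s$. More seriously, writing $s = s_0 + pr$ and expanding $\overline{s} = \overline{s_0}\sum_{j\geq 0}(-pr\,\overline{s_0})^j$ shows that the exponent $(u^2 s + \overline{s})/p^\alpha$ carries a cubic term $-p^{3-\alpha}r^3\overline{s_0}^{\,4}$, which is not an integer once $\alpha \geq 4$: the remaining sum over $r$ is therefore \emph{not} a quadratic Gauss sum, and one pass of completing the square does not suffice. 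The cure is to take $\beta = \lceil\alpha/2\rceil$ instead, so that $\gamma = \lfloor\alpha/2\rfloor$ pins $s$ modulo $p^\gamma$, each branch has at most $p$ representatives, and the $p$-adic expansion of $\overline{s}$ genuinely truncates at second order---this is exactly the Salié--Williams computation.
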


\begin{proposition}
	\label{prop:salie2b}
	Assume that $p=2$ and $\beta \geq 3$.
	Let $u$ be an integer with $2 \nmid u$.
	Denote $\omega = \exp(4\pi i u/q)$.
	\begin{enumerate}
        \item Assume that $\alpha = 2\beta$. Then
	\begin{equation*}
		S_\chi\left(u,u;q\right) =
            2^\beta\left(
			\omega\left(\chi(1)+i^u\chi\left(1+2^{\beta-1}\right)\right)
			+
			\overline{\omega}\left(\chi(-1)-i^u\chi\left(-1+2^{\beta-1}\right)\right)
			\right)
			.
	\end{equation*}
        \item Assume that $\alpha = 2\beta+1$. Then
	\begin{equation*}
		S_\chi\left(u,u;q\right) =
		2^\beta\left(
			2\omega i^u \sqrt{i}^{ut}\chi\left(1+2^{\beta -1}\right)
			+
		\overline{\omega} \chi\left(-1+2^{\beta-1}\right)
		\left(\sqrt{i}^{ut} + i^{-u} \sqrt{i}^{us}\right)
		\right)
		.
    \end{equation*}
    where $s = 5$ if $\beta = 3$ and $s = 1$ otherwise, 
    and $t = 3$ if $\beta = 3$ and $t = -1$ otherwise.
	\end{enumerate}
\end{proposition}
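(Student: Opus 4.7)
The plan is to adapt the classical Estermann/Salié parameterization to the 2-adic twisted setting. Write each unit $h$ modulo $q = 2^\alpha$ uniquely as $h = h_0 + 2^\beta j$, with $h_0$ odd in $\{0, \ldots, 2^\beta - 1\}$ and $j \in \{0, \ldots, 2^{\alpha - \beta} - 1\}$; since $\chi$ has modulus $2^\beta$, we have $\chi(h) = \chi(h_0)$. Letting $r$ denote the inverse of $h_0$ modulo $q$, the geometric expansion
\[
    \overline{h} \equiv r - 2^\beta j \, r^2 + 2^{2\beta} j^2 \, r^3 \pmod{q}
\]
is valid in both cases, since the next term $2^{3\beta} j^3 r^4$ vanishes modulo $q$. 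Substituting and separating the $h_0$-dependent and $j$-dependent factors of the exponential yields
\[
    S_\chi(u, u; q) = \sum_{h_0} \chi(h_0) \, \exp\!\left(2\pi i u (h_0 + r)/q\right) \, T(h_0),
\]
where $T(h_0)$ is an inner sum over $j$. In case (1), $\alpha = 2\beta$ and the quadratic-in-$j$ phase vanishes modulo $q$, so $T(h_0)$ is a geometric sum equal to $2^\beta$ precisely when $h_0^2 \equiv 1 \pmod{2^\beta}$, and $0$ otherwise. In case (2), $\alpha = 2\beta + 1$ and the quadratic term contributes a factor $(-1)^j$ (since $u r^3$ is odd), so the shifted geometric sum equals $2^{\beta + 1}$ precisely when $h_0^2 \equiv 1 + 2^\beta \pmod{2^{\beta + 1}}$, and $0$ otherwise.

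Next I identify the surviving $h_0$. For $\beta \geq 3$, the congruence $h_0^2 \equiv 1 \pmod{2^\beta}$ has the four solutions $h_0 \equiv \pm 1, \, \pm 1 + 2^{\beta - 1} \pmod{2^\beta}$, producing the four terms in formula (1). The congruence $h_0^2 \equiv 1 + 2^\beta \pmod{2^{\beta + 1}}$, restricted to odd $h_0 \in \{0, \ldots, 2^\beta - 1\}$, has exactly the two solutions $h_0 = 1 + 2^{\beta - 1}$ and $h_0 = -1 + 2^{\beta - 1}$, producing the two characters $\chi(\pm 1 + 2^{\beta - 1})$ in formula (2). For each surviving $h_0$, I then compute $r = h_0^{-1} \pmod{q}$ by 2-adic expansion of $(1 \pm 2^{\beta - 1})^{-1}$ and evaluate $\exp(2\pi i u (h_0 + r)/q)$ in terms of $\omega$ and powers of $\sqrt{i}$.

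The main computational obstacle is this last phase calculation in case (2). The Hensel expansion of $(1 \pm 2^{\beta - 1})^{-1}$ must be carried out through the cubic term $\pm 2^{3 \beta - 3}$, whose exponent satisfies $3\beta - 3 \geq \alpha = 2\beta + 1$ only when $\beta \geq 4$. For $\beta = 3$ one has $3\beta - 3 = 6 < 7 = \alpha$, so this cubic term survives modulo $q$ and forces the exceptional values $s = 5$, $t = 3$ in the statement; for $\beta \geq 4$ the cubic term vanishes and one recovers $s = 1$, $t = -1$. Assembling the resulting contributions and matching them against $\chi(\pm 1)$ in case (1) and $\chi(\pm 1 + 2^{\beta - 1})$ in case (2) yields the stated closed forms; the apparently asymmetric coefficients in the formula for case (2) simplify (via $\sqrt{i}^{ut} + i^{-u}\sqrt{i}^{us} = 2 \sqrt{i}^{ut}$) to the expected common prefactor $2^{\beta + 1}$.
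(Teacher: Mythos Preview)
Your approach is correct and rests on the same Estermann--Sali\'e decomposition that the paper invokes via Williams's formula \eqref{eqn:williams-nueva}; in case~(a) the two computations are literally identical (four surviving residues $\pm1,\pm1+2^{\beta-1}$ with prefactor $2^\beta$). The only genuine difference is in case~(b): you keep $h_0<2^\beta$ and let the quadratic phase $(-1)^j$ shift the inner geometric sum, so that only the two residues $h_0=\pm1+2^{\beta-1}$ survive, each with prefactor $2^{\beta+1}$. The paper instead follows Williams's convention $v<2^{\beta+1}$ with $v^2\equiv1\pmod{2^\beta}$, obtaining eight terms with prefactor $2^\beta$; the four carrying $\chi(\pm1)$ then cancel in pairs. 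Your bookkeeping is therefore more direct in this case, and your closing observation that $\sqrt{i}^{\,ut}+i^{-u}\sqrt{i}^{\,us}=2\sqrt{i}^{\,ut}$ (since $s-2=t$ in both regimes) exposes a simplification of the stated formula that the paper's eight-term tabulation leaves hidden.
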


\begin{proof}
    The result can be obtained as in the proof given by \cite{williams1971} for
	Proposition~\ref{prop:s1-explicit} above:
    letting
    $\delta= \left\lfloor \alpha/2 \right\rfloor$
    and
    $\gamma = \alpha - \delta$,
    following the author we get that
    \begin{equation}
		\label{eqn:williams-nueva}
        S_\chi(u,u;q) =
        2^\delta
        \cdot
        \sum_v
        \chi(v)
        \exp(2\pi i u(v+\overline{v})/q),
    \end{equation}
    where $v$ ranges over the solutions to $v^2 \equiv 1 \tmod{2^\delta}$ 
    such that $0 < v < 2^\gamma$.

	When $\alpha = 2 \beta$ we have $\gamma = \delta = \beta$, and the
	solutions are $v = 1 + w 2^{\beta-1}$, with $w \in \{0,1\}$.
	Their inverses modulo $2^\alpha$ are 
	$\overline v = 1 - w 2^{\beta-1} + w 2^{\alpha-2}$.
	In particular, $v + \overline v = 2 + w 2^{\alpha-2}.$

	When $\alpha = 2 \beta + 1$ we have $\gamma = \beta+1$ and $\delta = \beta$.
	In this case the solutions and the underlying data are described in the
	following table, in which $w \in \{0,1\}$, and we denote $s_0 = -t$, and
	$s_1 = -s$:

	\begin{table}[ht]
	\begin{tabular}{|c|c|c|}
	\hline
	$v$ & $\overline v$ & $v + \overline v$ \\
	\hline
	\hline
	$1+w\cdot2^{\beta}$ & $1-w\cdot 2^{\beta}+w^2\cdot 2^{\alpha-1}$ & $2+w\cdot 2^{\alpha-1}$ \\
	\hline
	$1+2^{\beta-1}+w\cdot 2^{\beta}$ & $1-2^{\beta-1}+2^{\alpha-2}+(-w+2^{\beta-2}t)\cdot2^{\beta}$ & $2+2^{\alpha-2}+t\cdot2^{\alpha-3}$ \\
	\hline
	$-1+(1+w)\cdot 2^{\beta}$ & $-1-2^{\beta}+(1+w)^2\cdot 2^{\alpha-1}$ & $-2+(1+w)^2\cdot 2^{\alpha-1}$ \\
	\hline
	$-1+2^{\beta-1}+w\cdot2^{\beta}$ & $-1-2^{\beta-1}-w\cdot2^{\alpha-2}-(w+2^{\beta-2}s_w)\cdot2^{\beta}$ & $-2-s_w\cdot2^{\alpha-3}$ \\
	\hline
	\end{tabular}
	\end{table}

	Plugging these formulas for $v$ and $v + \overline v$ into
	\eqref{eqn:williams-nueva}, in the cases of even and odd $\alpha$
	respectively, gives the result straightforwardly.
\end{proof}

\begin{remark}
The explicit formulas given in Propositions~\ref{prop:salie2a} and
\ref{prop:salie2b}, together with Theorem~\ref{thm:kloosterman} below, imply
the formula given by Lehmer in \cite[Thm. 7]{Leh38} for $A_{2^\alpha}(n)$ in the
case of the partition function (which the author proves without the framework of
twisted Kloosterman sums).
\end{remark}

\section{The HRR coefficients as twisted Kloosterman sums}
\label{sect:hrr}

The HRR coefficients $A_k(n)$ given by \eqref{eqn:Akn} involve Dedekind sums $s(h,k)$ (see \eqref{eqn:dedekind_sum}).
We start this section by recalling identities, congruences and reciprocity
relations for Dedekind sums that will be needed in our proofs.
Their proofs can be found in \cite{RW41,RG72}.

\begin{proposition}
\label{prop:dedekind1}
	Let $a,b$ be integers with $b>0$.
	\begin{enumerate}
		\item For every positive integer $q$,
			we have $s(qa, qb) = s(a, b)$.\label{item:s-cancelation}
		\item If $a \equiv a' \pmod{b}$,
			then $s(a, b) = s(a', b)$. \label{item:s-hhp}
            \item $s(-a,b) = -s(a,b)$.
	\end{enumerate}
\end{proposition}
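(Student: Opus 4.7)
The plan is to work directly from the definition \eqref{eqn:dedekind_sum}, exploiting the basic properties of the sawtooth function $\dparenth{x}$: it is $1$-periodic, it vanishes on $\ZZ$, and it satisfies $\dparenth{-x} = -\dparenth{x}$. Parts (b) and (c) of the statement reduce to a single observation each, while part (a) relies on the distribution identity for the sawtooth.

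For part (b), writing $a = a' + tb$ for some $t \in \ZZ$, I would observe that $\dparenth{aj/b} = \dparenth{a'j/b + tj}$, which by $1$-periodicity equals $\dparenth{a'j/b}$. Hence the summands defining $s(a,b)$ and $s(a',b)$ agree term by term. For part (c), the oddness of $\dparenth{\cdot}$ (which itself follows from the identity $\lfloor -x \rfloor = -\lfloor x \rfloor - 1$ valid for $x \notin \ZZ$) immediately gives $\dparenth{-aj/b} = -\dparenth{aj/b}$, so $s(-a,b) = -s(a,b)$.

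For part (a), the key ingredient is the distribution identity
\[
\sum_{\ell=0}^{q-1} \dparenth{x + \tfrac{\ell}{q}} = \dparenth{qx},
\]
valid for all real $x$. In the sum defining $s(qa,qb)$ I would split the index as $j = r + \ell b$ with $0 \leq r < b$ and $0 \leq \ell < q$. Since $qaj/(qb) = aj/b = ar/b + a\ell$ and $a\ell \in \ZZ$, the factor $\dparenth{qaj/(qb)}$ depends only on $r$ and equals $\dparenth{ar/b}$. The remaining sum over $\ell$ has the form $\sum_{\ell=0}^{q-1} \dparenth{r/(qb) + \ell/q}$, which by the distribution identity collapses to $\dparenth{r/b}$. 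Reassembling, the double sum becomes $\sum_{r=0}^{b-1} \dparenth{r/b}\,\dparenth{ar/b} = s(a,b)$.

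None of the three parts is genuinely difficult; the only mildly delicate point is invoking the distribution identity with the correct bookkeeping of the index split and checking that the boundary term $r=0$ contributes trivially (which it does, since $\dparenth{0} = 0$).
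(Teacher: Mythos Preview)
Your argument is correct. The paper, however, does not prove this proposition at all: it simply states these facts as standard and refers the reader to \cite{RW41,RG72} for proofs. So there is no ``paper's proof'' to compare against beyond the citation.

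Your direct approach via the periodicity, oddness, and distribution identity for the sawtooth function $\dparenth{\cdot}$ is the standard elementary route (and is essentially what one finds in \cite{RG72}). The bookkeeping in part~(a) is correct: the index split $j = r + \ell b$ is a bijection from $\{0,\dots,qb-1\}$ onto $\{0,\dots,b-1\}\times\{0,\dots,q-1\}$, the second factor $\dparenth{aj/b}$ indeed depends only on $r$, and the distribution identity $\sum_{\ell=0}^{q-1}\dparenth{x+\ell/q}=\dparenth{qx}$ holds for all real $x$ under the convention $\dparenth{n}=0$ for $n\in\ZZ$ (in particular at $r=0$, where both sides vanish). Nothing is missing.
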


\begin{remark}
\label{rmk:dedekind}
The first item shows why it suffices to consider Dedekind sums with relatively prime arguments.
The second item implies that in \eqref{eqn:Akn} the sum can be taken
over any set of representatives for $(\ZZ/k\ZZ)^\times$;
using this and the third item, by replacing $h$ by $-h$ in that sum we see that $A_k(n) \in \mathbb{R}$.
\end{remark}

\begin{proposition}\label{DedekindSumProperties}
Let $h, k$ be positive integers with $\gcd{(h, k)} = 1$.
\begin{enumerate}
\item The denominator of $s(h, k)$ is a divisor of $2k \gcd{(3, k)}$.
\label{item:s-entero}
\item If $\theta := \gcd{(3, k)}$, then
$
12hk\,s(h, k) \equiv h^2 + 1 \pmod{\theta k}.
$
\label{item:s-modk}
\item $12k\,s(h, k) \equiv 0 \pmod{3}$ if and only if $3 \nmid k$.
\item If $k$ is odd, then $12k\,s(h, k) \equiv k + 1 - 2 \legendre{h}{k} \pmod{8}.$
\label{item:s-mod8}
\item If $h$ is odd, then
	$12hk\,s(h, k) \equiv h^2 + k^2 + 3k + 1 + 2k\legendre kh \pmod{8k}$.
\label{item:s-mod8+}
\end{enumerate}
\end{proposition}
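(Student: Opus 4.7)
The plan is to derive all five properties from Dedekind's reciprocity formula
\begin{equation*}
s(h,k) + s(k,h) = -\tfrac14 + \tfrac{1}{12}\left(\tfrac hk + \tfrac kh + \tfrac{1}{hk}\right),
\end{equation*}
valid for coprime positive integers $h,k$. Multiplying by $12hk$ converts this into the integer identity
\begin{equation*}
12hk\,s(h,k) + 12hk\,s(k,h) = h^2 + k^2 + 1 - 3hk.
\end{equation*}
Combined with the reduction $s(k,h) = s(k \bmod h, h)$ coming from Proposition~\ref{prop:dedekind1}(b), this yields an effective recursion whose base case is the closed form $s(1,k) = (k-1)(k-2)/(12k)$. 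Establishing reciprocity itself can be done via the standard contour-integral argument applied to $\cot(\pi z)\cot(\pi hz/k)/z$ on a small square, or via a direct manipulation of the sawtooth sum in \eqref{eqn:dedekind_sum}.

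For (a) and (b) I would argue by strong induction on $h + k$. The base $h=1$ is immediate from the closed form: the denominator of $s(1,k)$ divides $2k\gcd(3,k)$, and $12k\,s(1,k) = k^2-3k+2 \equiv 2 = 1^2 + 1 \pmod{\theta k}$. For the inductive step, write $k = qh + r$ with $0 \leq r < h$; then $s(k,h) = s(r,h)$, and the inductive hypothesis applies since $r + h < h + k$. Solving the displayed identity for $12hk\,s(h,k)$ and using that the right-hand side is an integer recovers both the denominator bound and the stated congruence modulo $\theta k$ in a single step. Property (c) is then a direct corollary: if $3 \nmid k$ then $12k\,s(h,k) = 6\cdot(2k\,s(h,k))$ is divisible by $3$ by (a), while if $3 \mid k$ then (b) gives $12hk\,s(h,k) \equiv h^2 + 1 \not\equiv 0 \pmod 3$ (since $3\nmid h$), and multiplying by $h^{-1} \bmod 3$ shows $12k\,s(h,k) \not\equiv 0 \pmod 3$.

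Properties (d) and (e) require more delicate bookkeeping, because the right-hand sides involve Legendre/Jacobi symbols. The strategy is again induction via reciprocity, this time reducing modulo $8$ (respectively modulo $8k$), and invoking quadratic reciprocity $\legendre{h}{k}\legendre{k}{h} = (-1)^{(h-1)(k-1)/4}$ for odd coprime $h,k$. For (d), reducing the boxed identity modulo $8$, multiplying by the inverse of $h$ (which exists mod $8$ since $h$ is necessarily odd), and substituting the inductive expression for $12h\,s(k,h)$ leaves an expression whose Jacobi-symbol content is rearranged using quadratic reciprocity to match $k+1-2\legendre{h}{k}$. For (e), the congruence lifts (d) from modulo $8$ to modulo $8k$ by combining it with (b): since the content of (b) is modulo $k$ and of (d) is modulo $8$, the two can be glued via CRT on the odd-$h$ hypothesis (which forces $k$ to be handled in two parity cases), giving the refined congruence.

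The main obstacle I expect is the bookkeeping of the Legendre-symbol factors through the Euclidean recursion in (d) and (e): each reciprocity step introduces a sign $(-1)^{(h-1)(k-1)/4}$ that must be balanced against the mod-$8$ reduction of $h^2+k^2+1-3hk$, and keeping track of the switch from $\legendre{h}{k}$ to $\legendre{k}{h}$ (and then to the reduced residue mod $h$) through successive steps is where the argument becomes genuinely technical. The remaining properties (a), (b), (c) are comparatively routine consequences of reciprocity and the base case.
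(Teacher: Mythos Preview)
The paper does not actually prove this proposition: immediately before it, the authors write that the proofs ``can be found in \cite{RW41,RG72}'' and give no argument of their own. Your sketch is therefore being compared against the classical proofs in those references, and your overall strategy --- reciprocity plus a Euclidean-style induction, with quadratic reciprocity entering for the Jacobi-symbol congruences --- is precisely the approach taken there. Parts (a)--(c) are handled correctly in your outline.

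There is, however, a concrete error in your treatment of (d). You assert that ``$h$ is necessarily odd'', which is false: the hypothesis of (d) is only that $k$ is odd, and $h$ may well be even. Your plan to divide by $h$ modulo $8$ then breaks down. The standard fix is to observe that when $h$ is even and $k$ is odd, $k-h$ is odd, and $s(h,k) = -s(-h,k) = -s(k-h,k)$ reduces to the odd-$h$ case; but this step is missing from your sketch.

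Your plan for (e) via CRT from (b) modulo $k$ and (d) modulo $8$ is also shaky when $k$ is even, since $\gcd(8,k) > 1$ and CRT does not reach modulus $8k$. A cleaner route, which avoids CRT entirely, is to note that once (d) is established, it applies to $s(k,h)$ because $h$ is odd; multiplying $12h\,s(k,h) \equiv h+1-2\legendre{k}{h} \pmod 8$ by $k$ and substituting into the reciprocity identity $12hk\,s(h,k) = h^2+k^2+1-3hk - 12hk\,s(k,h)$ gives, using only that $h$ is odd (so $8 \mid 4(h+1)$), exactly the congruence in (e) modulo $8k$. So (e) is really a corollary of (d) plus reciprocity, not an independent induction.
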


\begin{proposition}\label{Rademacher-Whiteman}
Let $a, b, c$ be pairwise coprime positive integers.
Then
\begin{align*}
\left( s(ab, c) - \frac{ab}{12c} \right) + \left( s(bc, a) - \frac{bc}{12a} \right) - \left( s(b, ac) - \frac{b}{12ac} \right)+\frac{abc}{12} \in 2\mathbb{Z}.
\end{align*}
\end{proposition}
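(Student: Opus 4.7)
My plan follows the strategy of Rademacher and Whiteman in \cite{RW41}. The first step is to apply the classical two-term Dedekind reciprocity
\[
s(b, ac) + s(ac, b) = -\tfrac{1}{4} + \tfrac{1}{12}\left(\tfrac{b}{ac} + \tfrac{ac}{b} + \tfrac{1}{abc}\right)
\]
to replace $s(b, ac)$ by $-s(ac, b)$ plus explicit rational terms in $a, b, c$. After this substitution, the claim is reduced to showing that a linear combination of the symmetric triple $s(ab, c) + s(bc, a) + s(ac, b)$ and an explicit rational function of $a, b, c$ lies in $2\mathbb{Z}$.

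The second step is to invoke Rademacher's three-term reciprocity, which evaluates $s(\overline{a} b, c) + s(\overline{b} c, a) + s(\overline{c} a, b)$ in closed form, where $\overline{a}, \overline{b}, \overline{c}$ satisfy $a \overline{a} \equiv 1 \pmod{bc}$, $b \overline{b} \equiv 1 \pmod{ca}$, $c \overline{c} \equiv 1 \pmod{ab}$. To bridge to the symmetric triple of the first step, I plan to exploit two elementary properties of Dedekind sums: the periodicity of $s(\cdot, k)$ in its first argument (Proposition~\ref{prop:dedekind1}), and the inversion symmetry $s(h, k) = s(h^{-1}, k)$ valid for $\gcd(h, k) = 1$, which follows from the definition \eqref{eqn:dedekind_sum} by the change of variable $j \mapsto \overline{h} j$. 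Combining with the periodicity modulo the appropriate moduli, one recovers $s(ab, c) + s(bc, a) + s(ac, b)$ at the cost of further rational error terms.

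Collecting all contributions reduces the left-hand side of the claim to an explicit rational expression $R(a,b,c)$ supported over $24abc$. Integrality of $R(a,b,c)$ is guaranteed by the denominator bound in Proposition~\ref{DedekindSumProperties}(a). For the remaining factor of $2$, I plan to carry out a short case analysis on the parities of $a, b, c$: since they are pairwise coprime at most one is even, leaving a small number of cases, each settled via the mod-$8$ congruences in Proposition~\ref{DedekindSumProperties}(d)--(e).

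The main obstacle is the bookkeeping in the second step. Reconciling the arguments $ab, bc, ac$ (which appear naturally in the statement) with the arguments $\overline{a} b, \overline{b} c, \overline{c} a$ (required by Rademacher's three-term reciprocity) involves tracking several modular inverses taken with respect to the distinct moduli $c, a, b$, and ensuring the resulting rational corrections combine into a clean closed form. Beyond this, the verification of the parity condition through the mod-$8$ congruences is routine but must be executed with care.
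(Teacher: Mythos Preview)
The paper does not prove this proposition; it is quoted from \cite{RW41,RG72} (see the sentence preceding Proposition~\ref{prop:dedekind1}). Your plan, however, contains a genuine gap in the second step.

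After applying two-term reciprocity, the expression becomes
\[
E \;=\; T + (\text{explicit rational in }a,b,c),
\qquad
T := s(ab,c)+s(bc,a)+s(ca,b),
\]
and you propose to evaluate $T$ via Rademacher's three-term law together with the inversion $s(h,k)=s(\overline h,k)$. But inversion sends $s(ab,c)$ to $s(\overline a\,\overline b,c)$ (inverse of the \emph{product}), whereas the three-term law involves $s(\overline a\,b,c)$ (inverse of one factor only). Neither periodicity nor inversion converts one into the other, and both operations are exact --- they produce no ``rational error terms''. In fact $T$ is \emph{not} a rational function of $(a,b,c)$ at all: for $(a,b,c)=(3,2,5)$ one has $T=s(6,5)+s(10,3)+s(15,2)=\tfrac{1}{5}+\tfrac{1}{18}+0=\tfrac{23}{90}$, while the right-hand side of the three-term law at the same point is $-\tfrac14+\tfrac{1}{12}\bigl(\tfrac{3}{10}+\tfrac{2}{15}+\tfrac{5}{6}\bigr)=-\tfrac{13}{90}$. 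Since $T$ cannot be eliminated, the expression $E$ never reduces to a closed rational $R(a,b,c)$, and the concluding ``integrality plus parity of $R$'' argument has nothing to act on.

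The actual argument in \cite{RW41} (see also \cite[Ch.~4]{RG72}) does not attempt to evaluate $E$. One route writes a matrix in $\mathrm{SL}_2(\mathbb Z)$ with lower-left entry $ac$ as a product of two matrices with lower-left entries $a$ and $c$; comparing the $\eta$-multipliers of the product with the product of the multipliers yields precisely a mod-$2\mathbb Z$ relation of the required shape. The equivalent elementary route multiplies through by $12abc$ and verifies the congruence modulo $3$, modulo $a$, modulo $c$, and modulo $8$ separately, using exactly the congruences of Proposition~\ref{DedekindSumProperties}\eqref{item:s-modk},\eqref{item:s-mod8},\eqref{item:s-mod8+} --- which you already identified as the right tools, but which must be applied directly to $E$ rather than to a putative closed form.
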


Fix an eta-quotient $\eta^\delta$, with the notation as in Section~\ref{sect:etaq}. 
We are going to assume, without loss of generality, that $24 \mid m$ for every
$m \in \mathcal{M}$ (which in turn implies that $n_0$, as defined in
\eqref{eqn:n0}, is an integer); see
Remark~\ref{rmk:shift}.

Let $k$ be a positive integer, and let $\lambda = \val_2(k)$.
For each $m$ denote $k_m = k/\gcd(m,k)$ and $m_k= m/\gcd(m,k)$.
Let
\begin{equation*}
	a_1 = -\sum_m \delta_m u_m m, \qquad
	b_1  = -\sum_m \delta_m u_m v_m \gcd(m,k),
\end{equation*}
where for each $m$ we let $u_m = 1$ if $3 \mid k_m$, and $u_m$ is an integer
such that $u_m \equiv 1 \mod(k_m), \, u_m \equiv 0 \mod(3)$ otherwise;
furthermore, $v_m$ denotes the inverse of $m_k$ modulo $\gcd(3,k_m) k_m$.
We also let
\begin{equation*}
	a_2 = -\sum_{2 \mid k_m} \delta_m m, \qquad
	b_2  = -\sum_{2 \mid k_m} \delta_m w_m \gcd(m,k)
		\left(k_m^2 + 3k_m + 1\right),
\end{equation*}
where for each $m$ such that $2 \mid k_m$ (so that $m_k$ is odd) we
denote by $w_m$ the inverse of $m_k$ modulo $2^{3+\lambda}$.

We remark that $3\mid a_1, \,3\mid b_1$ and $8 \mid a_2, \, 8\mid b_2$: this follows from the fact that, given a prime $p$, then $p\mid k_m$ if and only if $\val_p(k) > \val_p(m)$.

We let $t_1,t_2$ be integers such that
\begin{equation*}
	\begin{cases}
		t_1 \equiv 1 \pmod{3k/2^\lambda},\\
		t_1 \equiv 0 \pmod{2^{3+\lambda}},
	\end{cases}
	\qquad
	\begin{cases}
		t_2 \equiv 1 \pmod{2^{3+\lambda}},\\
		t_2 \equiv 0 \pmod{3k/2^\lambda},
	\end{cases}
\end{equation*}
Then $t_1 a_1 + t_2 a_2$ and $t_1 b_1 + t_2 b_2$ are integers modulo $24k$, and the above remark shows that they are divisible by 24.
Hence we let
\begin{equation*}
    a = (t_1 a_1 + t_2 a_2)/24, \quad
    b = (t_1 b_1 + t_2 b_2)/24,
\end{equation*}
which are integers modulo $k$.

Finally, denoting $\lambda_m = \val_2(k_m)$ and $k'_m = k_m /
2^{\lambda_m}$ for each $m$, we consider the function
\begin{equation*}
    \psi : 2\ZZ + 1 \to \{\pm1\},
    \qquad
	\psi(h) =
     	\prod_{\substack{2\nmid \delta_m,\, 2\mid k_m,\\ k'_m \equiv 1 \tmod4}}
        (-1)^{
        \tfrac{h m_k-1}2
        }
        \cdot
     	\prod_{\substack{2\nmid \delta_m,\, 2\mid k_m,\\ 2 \nmid \lambda_m}}
        (-1)^{
        \tfrac{(h m_k)^2-1}8
        }
        .
\end{equation*}
Then there exist $\alpha,\beta,\gamma \in \ZZ$ 
with $8 \mid \alpha+4\beta-\gamma$ such that
\begin{equation*}
	\psi(h) = (-1)^{\tfrac18(\alpha h^2 + 4\beta h - \gamma)}.
\end{equation*}
This implies that $\rho(h) = \psi(1) \psi(h)$ is a character modulo $8$.
Note that when $16\nmid k$, we have that $2\nmid k_m$ for every $m$,
hence $\psi = \rho \equiv 1$.
\medskip

The main result of this section shows that the HRR coefficients 
defined by \eqref{eqn:Akn} can be described by twisted Kloosterman sums.
We remark that, in the case of the partition function considered by \cite{Leh38}, a similar result is obtained only when $k$ is a prime power.
\begin{theorem}
\label{thm:kloosterman}
	Let $k$ be a positive integer.
	With the notation as above, let
	\begin{equation*}
		c = \tfrac12\sum_{2\nmid k_m} k_m-3 + 
			\sum_m
			\delta_m \legendre{(-1)^{k_m} \, m_k}{k'_m},
            \qquad
		\chi = 
			\prod_{2\nmid \delta_m} \legendre{\cdot}{k'_m}
			.
	 \end{equation*}
	 Then for every $n$
		$$A_k(n) = i^{c} \psi(1) \, S_{\chi\rho}(a-n,b;k).$$
\end{theorem}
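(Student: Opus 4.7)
The plan is to follow the strategy of Lehmer (\cite{Leh38}), extended beyond prime powers and to arbitrary eta-quotients: each Dedekind sum $s(h m_k, k_m)$ appearing in \eqref{eqn:Akn} is to be rewritten, modulo a suitable integer, as an explicit $\ZZ$-linear expression in $h$ and its inverse $\overline h$ modulo $k$; the resulting congruence recasts the exponential in the definition of $A_k(n)$ into the form of a twisted Kloosterman sum.

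First, I would apply Proposition~\ref{DedekindSumProperties} to each summand. Part~(b) shows that $12 h_m k_m \, s(h_m, k_m) \equiv h_m^2 + 1 \pmod{\theta_m k_m}$, where $h_m = h m_k$ and $\theta_m = \gcd(3, k_m)$; multiplying by $\overline{h_m} \equiv v_m \overline h$ gives a congruence for $12 k_m s(h_m, k_m)$ in terms of $h m_k + v_m \overline h$. The adjustment factors $u_m$ are designed precisely so that, after multiplying by $\delta_m \gcd(m,k)$ and summing, the individual congruences (some with modulus $k_m$, some with modulus $3 k_m$) combine into a single uniform congruence
\begin{equation*}
	\sum_m 12 k \delta_m s(h_m, k_m) \equiv -a_1 h - b_1 \overline h \pmod{3k/2^\lambda}.
\end{equation*}
Simultaneously, parts~(d) and~(e) yield the mod-$2^{3+\lambda}$ contribution: odd-$k_m$ terms produce a Legendre symbol $\legendre{h m_k}{k_m}$, while even-$k_m$ terms (where $h m_k$ is necessarily odd) produce a Legendre symbol $\legendre{k_m}{h m_k}$ and a quadratic correction $(h m_k)^2 + k_m^2 + 3 k_m + 1$. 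The quadratic corrections, weighted by $\delta_m w_m \gcd(m,k)$, combine into $-a_2 h - b_2 \overline h$ modulo $2^{3+\lambda}$, up to a multiplicative Legendre-symbol factor that I carry along separately.

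Second, I would combine the two congruences via the Chinese Remainder Theorem using the weights $t_1, t_2$, obtaining a single congruence modulo $24k$. The divisibilities $3 \mid a_1, b_1$ and $8 \mid a_2, b_2$ noted in the text ensure that $t_1 a_1 + t_2 a_2$ and $t_1 b_1 + t_2 b_2$ are divisible by $24$, making $a$ and $b$ integers modulo $k$. Substituting into the exponential in \eqref{eqn:Akn}, whose full argument equals $-2\pi i (24 h n + 12 k \sum_m \delta_m s(h_m, k_m))/(24 k)$, the exponent becomes $2\pi i ((a-n) h + b \overline h)/k$, which is precisely the Kloosterman exponential.

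Third, I would collect the Legendre-symbol prefactors accumulated in the second step. Applying quadratic reciprocity to rewrite $\legendre{k_m}{h m_k}$ in terms of $\legendre{h m_k}{k'_m}$ introduces the sign factor $\psi(h)$ (which captures exactly the $\tfrac{h m_k - 1}{2}$ and $\tfrac{(h m_k)^2 - 1}{8}$ corrections from reciprocity) together with constant phases $\epsilon_{k'_m}$ that, together with the constant Legendre symbols $\legendre{(-1)^{k_m} m_k}{k'_m}$ pulled out of $\legendre{h m_k}{k'_m}$, assemble into the overall factor $i^c$. Since $\rho(h) = \psi(1)\psi(h)$ is a Dirichlet character modulo $8$, the combined twist equals $\chi(h) \rho(h)$ times the constant factor $i^c \psi(1)$; pulling this constant out of the sum over $h$ recovers the claimed identity. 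The main obstacle will be the delicate bookkeeping in this final step: carefully tracking all phases $i^{\pm 1}$ and sign factors from quadratic reciprocity across odd $k_m$ of varying residues modulo $4$ and even $k_m$ of varying $2$-adic valuations, and verifying that these assemble precisely into the explicit expression defining $c$ and the character $\rho$. A secondary subtlety is the uniform treatment of the cases $3 \mid k_m$ versus $3 \nmid k_m$, for which the carefully chosen multipliers $u_m$ are the key mechanism.
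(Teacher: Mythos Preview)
Your proposal is correct and follows essentially the same approach as the paper: both write $g(h) = -12k\sum_m \delta_m s(h m_k, k_m) - 24nh$ and determine it modulo $24k$ by using Proposition~\ref{DedekindSumProperties}(\ref{item:s-modk}) for the odd part (modulo $3k$, which implies your $3k/2^\lambda$) and parts (\ref{item:s-mod8}), (\ref{item:s-mod8+}) for the $2$-primary part, then glue via the CRT weights $t_1,t_2$ and collect the Legendre-symbol factors through quadratic reciprocity into $i^c\psi(1)\,\chi\rho(h)$. The obstacles you flag---the phase bookkeeping and the role of the multipliers $u_m$---are exactly where the paper's effort goes as well.
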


\begin{proof}
We start by writing \eqref{eqn:Akn} as
\begin{equation}
 \label{eqn:g-Ak}
    A_k(n)=\sum_{\substack{0\leq h<k \\(h, k)=1}}
	\exp \left( \frac{\pi i \, g(h)}{12k} \right),
 \end{equation}
where for each $h$ as above we denote
$$g(h)=-12k\sum_{m} \delta_{m}s(mh,k)-24nh.$$
Proposition~\ref{prop:dedekind1} (\ref{item:s-entero}) shows that $g(h) \in \ZZ$
(moreover, it is even).
We need to describe $g(h) \tmod{24k}$.
For this purpose, for each $m \in \mathcal{M}$ we denote $h_m = h m_k$,
and we note that, by Proposition~\ref{prop:dedekind1} (\ref{item:s-cancelation}), we have
\begin{equation}
	\label{eqn:shmkm}
	s(mh,k) = s(h_m,k_m).
\end{equation}

\medskip

\emph{Modulo $2^{3+\lambda}$}.
Let $m$ be such that $2\nmid k_m$.
It is easy to see that $$\beta + 1 -2 \legendre{\alpha}{\beta} \equiv -6\beta
\legendre{-\alpha}{\beta} - 3\beta (\beta -3)  \pmod{8}$$ for all odd,
relatively prime $\alpha,\beta$.
Since $k_m$ is odd we can use
Proposition~\ref{DedekindSumProperties} (\ref{item:s-mod8})
on $s(h_m,k_m)$, which combined with \eqref{eqn:shmkm} gives
\begin{align*}
    12k_m \, s(mh, k) \equiv 
    -6k_m\legendre{-h_m}{k_m} - 3k_m(k_m - 3) \pmod{8}.
\end{align*}
Multiplying this equation by $k/k_m$ we get that
\begin{equation}
	\label{eqn:mod23laimpar}
    12k \, s(mh, k) \equiv 
	-6k\legendre{-h_m}{k_m} - 3k(k_m - 3) \pmod{2^{3+\lambda}}.
\end{equation}

Assume now that $m$ is such that $2\mid k_m$.
In this case we have that $h_m$ is odd.
Multiplying the congruence for $s(h_m,k_m)$ given by Proposition \ref{DedekindSumProperties}
(\ref{item:s-mod8+}) by $\gcd(m,k) \, \overline{h_m}$ and using \eqref{eqn:shmkm}
we have that
\begin{equation}
	\label{eqn:mod23lapar}
12k\,s(mh, k)
\equiv 
m h + \overline{h_m}\,
\biggl(
k k_m+
3k+\gcd(m,k) +2k
\legendre{k_m}{h_m}
\biggr)
\pmod{2^{3+\lambda}}.
\end{equation}

By quadratic reciprocity we have
\begin{equation*}
	\legendre{k_m}{h_m} =
	\legendre{2^{\lambda_m}}{h_m}
	\legendre{k'_m}{h_m} =
		(-1)^{\lambda_m\tfrac{h_m^2-1}8
			+ \tfrac{k'_m-1}2\tfrac{h_m-1}2}
			\legendre {h_m}{k'_m}.
\end{equation*}
Since $\overline{h_m} \equiv (-1)^{\tfrac{h_m-1}2} \tmod 4$,
we have that
$2k\overline{h_m} \equiv -6k \, (-1)^{\tfrac{h_m-1}2} \tmod{2^{3+\lambda}}$.
Then,
\begin{equation}
	\label{eqn:quadratic_recip}
	2k \overline{h_m}\legendre{k_m}{h_m} 
	\equiv
		-6k \,\psi_m(h) \legendre {h_m}{k'_m}
	\pmod{2^{3+\lambda}},
\end{equation}
with $\psi_m(h) = (-1)^{\lambda_m\tfrac{h_m^2-1}8
			+ \tfrac{k'_m+1}2\tfrac{h_m-1}2}$.

Combining \eqref{eqn:mod23laimpar}, \eqref{eqn:mod23lapar} and
\eqref{eqn:quadratic_recip} we get, adding up over $m$, that
\begin{multline}
    \label{eqn:gmod8}
	g(h) \equiv
	a_2 h + b_2 \overline{h} +\\
	6k\biggl(
		\,
	\sum_{2\nmid k_m} \tfrac12\delta_m (k_m-3)
	+ 
	\delta_m
	\legendre{-m_k}{k'_m}
	\legendre{h}{k'_m}
	+ 
	\sum_{2\mid k_m}
	\delta_m
	\legendre{m_k}{k'_m}
	\legendre{h}{k'_m}
	\psi_m(h)
	\biggr)
	\pmod{2^{3+\lambda}}
	.
\end{multline}

\medskip

\emph{Modulo $3k$}. 
By Proposition \ref{DedekindSumProperties} (\ref{item:s-modk})
we have that
\begin{equation}
	\label{eqn:mod3km}
	12k_m\, s(h_m,k_m) 
	\equiv u_m h_m + u_m \widehat{h_m}\pmod{3 k_m},
\end{equation}
where $\widehat{x}$ denotes the inverse of $x$ modulo $\gcd(3,k_m) k_m$.
With this notation,
\begin{equation*}
	\gcd(m,k) \widehat{h_m} \equiv \gcd(m,k) \widehat{m_k} \overline{h}
	\pmod{3k}.
\end{equation*}
Using this and \eqref{eqn:shmkm}, and multiplying \eqref{eqn:mod3km} by $k/k_m$
we get that
\begin{equation*}
	12k\, s(m h,k) \equiv
	u_m m h +
	u_m v_m \gcd(m,k) \overline{h}
	\pmod{3k},
\end{equation*}
and adding up over $m$ we conclude that
\begin{equation}
	\label{eqn:gmod3k}
	g(h) \equiv (a_1 - 24n) h + b_1 \overline{h} \pmod{3k}.
\end{equation}

\medskip

\emph{Finally}.
Combining \eqref{eqn:gmod8} and \eqref{eqn:gmod3k} we get that
\begin{multline*}
	g(h) \equiv
	24(a-n) h + 24 b\overline{h} +\\
	6k\biggl(
		\,
	\sum_{2\nmid k_m} \tfrac12\delta_m (k_m-3)
	+ 
	\delta_m
	\legendre{-m_k}{k'_m}
	\legendre{h}{k'_m}
	+ 
	\sum_{2\mid k_m}
	\delta_m
	\legendre{m_k}{k'_m}
	\legendre{h}{k'_m}
	\psi_m(h)
	\biggr)
	\pmod{24k}
	.
\end{multline*}
Using this and \eqref{eqn:g-Ak}, and that
$\exp(\pm\tfrac{\pi i}2 t) = (\pm i)^t$ for every integer $t$,
we get that
\[
    A_k(n) = i^{c}\sum_{\substack{0\leq h<k \\(h, k)=1}} \chi(h)\psi(h) \exp{\left(
	\frac{2\pi i}{k}\left( (a-n)h +b\overline{h} \right) \right)}.
\]
This completes the proof.
\end{proof}

We conclude this section by remarking that the characters $\chi$ and $\rho$
appearing in Theorem~\ref{thm:kloosterman} satisfy that 
\begin{equation}
    \label{eqn:char_locales}
	\chi = \prod_{p \mid k,\,p\neq 2} \chi_p, \qquad
	\rho = \legendre{d_2}{\cdot},
\end{equation}
where $\chi_p \in \{\tri,\krop\}$ and $d_2 \in \{1,-1,2,-2\}$; these local
components can be easily be obtained from $k$ and $\delta$.
This will we useful when $k = q = p^\alpha$ with $p$ prime
(see Theorem \ref{thm:multiplicativity} below),
or when combining Theorem~\ref{thm:kloosterman} with \eqref{eqn:k-mult}.

\section{Multiplicativity of the HRR coefficients}
\label{sect:multiplicativity}

We now state one of our main results which shows that, under certain hypotheses, the numbers $A_k(n)$
satisfy multiplicativity properties, besides those inherited by
Theorem~\ref{thm:kloosterman} and \eqref{eqn:k-mult}.

Given relatively prime integers $k_1,k_2$, denote
\begin{equation*}
	u(k_1,k_2) = 
	\sum_m m\delta_m\left(
	\frac{k_1^2}{\gcd\left(m,k_1\right)^2}+
	\frac{k_2^2}{\gcd\left(m,k_2\right)^2}-
	\frac{(k_1 k_2)^2}{\gcd\left(m,k_1 k_2\right)^2}-1\right).
\end{equation*}

\begin{theorem}\label{thm:multiplicativity}Let $k_1,k_2 > 1$ be relatively prime integers, and let $k= k_1 k_2$.
Assume that there exists a positive integer $\ell$ relatively prime to $k$ such
that for every $m\in\mathcal{M}$ the following
hold:
\begin{equation}
    \label{eqn:mult_ell}
\begin{cases}\ell\equiv\gcd\left(m,k_1\right)^2
\quad\big(\mymod{\frac{k_2}{\gcd\left(m,k_2\right)}}\big),\\
\ell\equiv\gcd\left(m,k_2\right)^2
\quad\big(\mymod{\frac{k_1}{\gcd\left(m,k_1\right)}}\big).\end{cases}
\end{equation}
Let $\theta_1,\theta_2$ be such that $\theta_1 \theta_2 = 24$ and $\gcd(\theta_1 k_1, \theta_2 k_2) = 1$.
Then there exist $n_1,n_2\in\mathbb{Z}_{\geq 1}$ which are solutions
to
\begin{equation}
    \label{eqn:mult_n1n2}
\begin{cases}\ell(24n-u(k_1,k_2))\equiv 24n_1k_2^2\pmod{\theta_1k_1},\\
 \ell(24n-u(k_1,k_2))\equiv 24n_2k_1^2\pmod{\theta_2k_2}.\end{cases}
\end{equation}
 
Moreover, they satisfy $A_k(n)=A_{k_1}(n_1)A_{k_2}(n_2)$.

\end{theorem}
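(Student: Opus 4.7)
The plan is to factor the sum defining $A_k(n)$ via the CRT isomorphism $(\ZZ/k\ZZ)^\times \simeq (\ZZ/k_1\ZZ)^\times \times (\ZZ/k_2\ZZ)^\times$, using Rademacher--Whiteman reciprocity (Proposition \ref{Rademacher-Whiteman}) to split each Dedekind sum in the exponent. Parametrize $h$ coprime to $k$ by $(h_1, h_2)$ with $h \equiv h_i \pmod{k_i}$. For each $m \in \mathcal{M}$, write $K_i = k_i/\gcd(m, k_i)$ and $M_i = m/\gcd(m, k_i)$; then $k_m = K_1 K_2$ with $\gcd(K_1, K_2) = 1$, the triple $(K_1, h m_k, K_2)$ is pairwise coprime, and $m_k/(K_1 K_2) = m/k$. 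Applying Proposition \ref{Rademacher-Whiteman} with $a = K_1$, $b = h m_k$, $c = K_2$ yields
\[
s(h m_k, K_1 K_2) \equiv s(K_2 h m_k, K_1) + s(K_1 h m_k, K_2) + \frac{h m (K_1^2-1)(K_2^2-1)}{12 k} \pmod{2\ZZ}.
\]

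Next, reduce each split Dedekind sum to match the form in $A_{k_i}$. By Proposition \ref{prop:dedekind1} (\ref{item:s-hhp}), $s(K_2 h m_k, K_1)$ depends only on $K_2 h m_k \pmod{K_1}$. Using $h \equiv h_1$ and $m_k \equiv M_1 \overline{\gcd(m, k_2)} \pmod{K_1}$, together with the identity $K_2 \overline{\gcd(m, k_2)} = k_2 \cdot \overline{\gcd(m, k_2)^2}$ and the first condition of \eqref{eqn:mult_ell} (which gives $\overline{\gcd(m, k_2)^2} \equiv \overline{\ell} \pmod{K_1}$), this sum equals $s(r_1 h_1 M_1, K_1)$, where $r_1 := k_2 \overline{\ell} \pmod{k_1}$ is independent of $m$. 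The global change of variable $h_1 \mapsto \overline{r_1} h_1 = \ell \overline{k_2}\, h_1$ on $(\ZZ/k_1\ZZ)^\times$ then turns it into $s(h_1 M_1, K_1)$, the Dedekind sum appearing in $A_{k_1}$. The second congruence of \eqref{eqn:mult_ell} handles $s(K_1 h m_k, K_2)$ symmetrically via $h_2 \mapsto \ell \overline{k_1}\, h_2$, producing $s(h_2 M_2, K_2)$.

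After these substitutions, setting $g(h) = -12 k \sum_m \delta_m s(mh, k) - 24 n h$ as in \eqref{eqn:g-Ak} and using $\sum_m \delta_m m (K_1^2-1)(K_2^2-1) = -u(k_1, k_2)$ directly from the definition of $u$, the substituted $g(h')$ becomes
\[
g(h') \equiv k_2 G_1(h_1) + k_1 G_2(h_2) - h'\bigl(24 n - u(k_1, k_2)\bigr) \pmod{24 k},
\]
with $G_i(h_i) := -12 k_i \sum_m \delta_m s(h_i M_i, K_i)$ and $h'$ the CRT lift of $(\overline{r_1} h_1, \overline{r_2} h_2)$. The identity $A_k(n) = A_{k_1}(n_1) A_{k_2}(n_2)$ translates, with $g_i(h_i) = G_i(h_i) - 24 n_i h_i$, to $g(h') \equiv g_1(h_1) k_2 + g_2(h_2) k_1 \pmod{24 k}$, which simplifies to $h'(24 n - u(k_1, k_2)) \equiv 24(n_1 h_1 k_2 + n_2 h_2 k_1) \pmod{24 k}$. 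Splitting this via CRT on $24 k = \theta_1 k_1 \cdot \theta_2 k_2$ (coprime by hypothesis) and plugging in $h' \equiv \ell \overline{k_2}\, h_1 \pmod{k_1}$ and $h' \equiv \ell \overline{k_1}\, h_2 \pmod{k_2}$ yields exactly the system \eqref{eqn:mult_n1n2}; positivity of $n_1, n_2$ is arranged by translation, and uniqueness modulo $k_i$ follows from $\gcd(24 k_{3-i}^2, \theta_i k_i) = \theta_i$, a consequence of the coprimality.

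The main obstacle will be the bookkeeping in the last step, where one must check that matching $g(h')$ against $g_1(h_1) k_2 + g_2(h_2) k_1$ produces the factors $k_{3-i}^2$ appearing in \eqref{eqn:mult_n1n2} via the interplay between $h' \equiv \ell \overline{k_{3-i}} h_i \pmod{k_i}$ and the weight $k_{3-i}$, and that the congruence is valid modulo the full $\theta_i k_i$ rather than just $k_i$. The latter reduces to ensuring that the $24$-part of $g(h')$ splits consistently between the two factors, a condition that ultimately rests on $\gcd(\theta_1, \theta_2) = 1$ together with the careful integrality properties of the Dedekind sums provided by Proposition \ref{DedekindSumProperties}.
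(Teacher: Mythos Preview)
Your approach is correct and is essentially the paper's own proof: both arguments factor $A_k(n)$ via CRT, apply Rademacher--Whiteman (Proposition~\ref{Rademacher-Whiteman}) to split each $s(hm_k,k_m)$ over $K_1,K_2$, and then invoke the $\ell$-hypothesis \eqref{eqn:mult_ell} to align the split Dedekind sums with those occurring in $A_{k_i}(n_i)$. The only organizational difference is that the paper makes the substitutions $h_i\mapsto k_jh_i$ in $A_{k_i}(n_i)$ and $h\mapsto\ell h$ in $A_k(n)$ separately, which produces a final divisibility condition $24k\mid\bigl(24n_1k_2^2+24n_2k_1^2-\ell(24n-u)\bigr)$ that is already independent of $h$; your single substitution $h_i\mapsto\ell\,\overline{k_j}\,h_i$ is equivalent but leaves $h_1,h_2,h'$ in the condition until the very end.

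Regarding the ``main obstacle'' you flag: the tool you need is not Proposition~\ref{DedekindSumProperties} but the elementary fact that $24\mid u(k_1,k_2)$, which follows because $24\mid(K_1^2-1)(K_2^2-1)$ whenever $K_1,K_2$ are coprime (the paper records this when proving solvability of \eqref{eqn:mult_n1n2}). Since $\theta_i\mid 24\mid(24n-u)$ and $k_i\mid\bigl(h'-\ell\,\overline{k_j}\,h_i\bigr)$, the product $h'(24n-u)\pmod{\theta_ik_i}$ depends only on $h'\pmod{k_i}$; plugging in and multiplying through by the unit $k_j$ then gives exactly \eqref{eqn:mult_n1n2}, with the $k_j^2$ arising from $k_j\cdot\overline{k_j}^{-1}$ as you anticipated.
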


\begin{remark}
    When $\gcd(k,m) = 1$ for every $m \in \mathcal{M}$, the hypothesis of the theorem is satisfied with $\ell = 1$.
\end{remark}

\begin{proof}

To show that there exist $n_1,n_2\in\mathbb{Z}_{\geq 1}$ solving the linear
system of congruences in the statement it suffices to see that both
$\gcd\left(24k_2^2,\theta_1k_1\right)$ and
$\gcd\left(24k_1^2,\theta_2k_2\right)$ divide
$24n-u(k_1,k_2)$.
This follows immediately from the fact they both divide $24$, and the fact that if $s,t\in\mathbb{Z}$ are relatively prime then $24 \mid s^2+t^2-s^2t^2-1$.

For simplicity, for each $m$ we denote $m_1 = \gcd(m,k_1), m_2 = \gcd(m,k_2)$ and $m_0 = m_1 m_2$.
Since $\gcd\left(k_1,k_2\right)=1$, letting $i \neq j$ we have that multiplication by $k_i$ permutes
$\left(\mathbb{Z}/k_j\mathbb{Z}\right)^\times$.
Therefore (see Remark~\ref{rmk:dedekind}, which we use repeatedly) we can write
\begin{equation*}
	A_{k_i}\left(n_i\right)  = 
	\sum_{\substack{0\leq h_i<k_i \\(h_i, k_i)=1}} \exp
\left[\pi I\left(-\frac{2n_ik_jh_i}{k_i}-\sum_{m}
\delta_ms\left(\frac{mk_jh_i}{m_i},\frac{k_i}{m_i}\right)\right)\right], \quad i \neq j,
\end{equation*}
letting $I$ denote momentarily the imaginary unit, to avoid confusions. Then
by the Chinese Remainder Theorem we get that
\begin{multline*}
A_{k_1}(n_1) A_{k_2}(n_2) = \sum_{\substack{0\leq h<k \\(h, k)=1}}\exp \left[\pi I\left(-\frac{2n_1k_2h}{k_1}-\frac{2n_2k_1h}{k_2}-\right.\right.\\\left.\left.\sum_{m} \delta_m\left(s\left(\frac{mk_2h}{m_1},\frac{k_1}{m_1}\right)+s\left(\frac{mk_1h}{m_2},\frac{k_2}{m_2}\right)\right)\right)\right].\end{multline*}
Then, replacing $h$ by $\ell h$ in the sum defining $A_k(n)$ we get that, if
\begin{multline}\label{sufficientcondition}\frac{2n_1k_2h}{k_1}+\frac{2n_2k_1h}{k_2}-\frac{2nh\ell}{k_1k_2}+\sum_{m} \delta_m\left(s\left(\frac{mk_2h}{m_1},\frac{k_1}{m_1}\right)+\right.\\\left.s\left(\frac{mk_1h}{m_2},\frac{k_2}{m_2}\right)-s\left(\frac{mh\ell}{m_0},\frac{k_1k_2}{m_0}\right)\right)\in 2\mathbb{Z}\end{multline}
for every $h$ prime to $k$, then $A_k\left(n\right)=A_{k_1}\left(n_1\right)A_{k_2}\left(n_2\right)$.

Fix $h$ as above, and let $m \in \mathcal{M}$.
It is easy to see that, since $\gcd(k_1,k_2) = 1$, then
\[
    \gcd\left(\frac{k_1}{m_1},\frac{k_2}{m_2}\right) = 
    \gcd\left(\frac{mh\ell}{m_0},\frac{k_1}{m_1}\right) =
    \gcd\left(\frac{mh\ell}{m_0},\frac{k_2}{m_2}\right)
    = 1.
    \]
We can therefore apply Proposition~\ref{Rademacher-Whiteman} with
$a\coloneqq \frac{k_2}{m_2}, \,
b\coloneqq \frac{mh\ell}{m_0}, \,
c\coloneqq \frac{k_1}{m_1}$
to get that\begin{multline*}s\left(\frac{mk_2h\ell}{m_1m_2^2},\frac{k_1}{m_1}\right)-\frac{\frac{mk_2}{m_2^2}h\ell}{12k_1}+s\left(\frac{mk_1h\ell}{m_2m_1^2},\frac{k_2}{m_2}\right)-\frac{\frac{mk_1}{m_1^2}h\ell}{12k_2}-\\s\left(\frac{mh\ell}{m_0},\frac{k_2k_1}{m_0}\right)+\frac{mh\ell}{12k_1k_2}+
\frac{\frac{k_1k_2}{m_0^2}mh\ell}{12}
\in 2\mathbb{Z}.\end{multline*}
Recalling that $m_i^2\equiv \ell\pmod{\frac{k_j}{m_j}}$ for $i\neq j$,
Proposition \ref{prop:dedekind1} \eqref{item:s-hhp} allows us to rewrite the above as
\begin{multline*}s\left(\frac{mk_2h}{m_1},\frac{k_1}{m_1}\right)-\frac{\frac{mk_2}{m_2^2}h\ell}{12k_1}+s\left(\frac{mk_1h}{m_2},\frac{k_2}{m_2}\right)-\frac{\frac{mk_1}{m_1^2}h\ell}{12k_2}-\\s\left(\frac{mh\ell}{m_0},\frac{k_2k_1}{m_0}\right)+\frac{mh\ell}{12k_1k_2}+\frac{\frac{k_1k_2}{m_0^2}mh\ell}{12}\in 2\mathbb{Z}.\end{multline*}
This gives that
\begin{equation*}
    - \frac{h\ell \, u(k_1,k_2)}{12 k_1 k_2}
    +
    \sum_{m} \delta_m\left(s\left(\frac{mk_2h}{m_1},\frac{k_1}{m_1}\right)+\right.\left.s\left(\frac{mk_1h}{m_2},\frac{k_2}{m_2}\right)-s\left(\frac{mh\ell}{m_0},\frac{k_1k_2}{m_0}\right)\right)
    \in 2\mathbb{Z},
\end{equation*}
and therefore \eqref{sufficientcondition} holds if and only if
\begin{align*}\frac{h}{12k_1k_2}\left(24n_1k_2^2+24n_2k_1^2-\ell(24n-u(k_1,k_2))\right)\in 2\mathbb{Z}.\end{align*}
This condition can be made independent from $h$, by showing that\[24k_1k_2 = (\theta_1 k_1)(\theta_2 k_2)\mid
24n_1k_2^2+24n_2k_1^2-\ell(24n-u(k_1,k_2)).\]
Since $\theta_jk_j\mid 24k_j^2$ for each $j\in\left\{1,2\right\}$, the above becomes equivalent to
\begin{align*}
	\theta_1k_1 & \mid 24n_1k_2^2-\ell(24n-u(k_1,k_2))\\
	\theta_2k_2 & \mid 24n_2k_1^2-\ell(24n-u(k_1,k_2)),
\end{align*}
which are true by definition of $n_1,n_2$.\end{proof}

We conclude this section with the following generalization of \cite[Thm. 3]{Leh38}.

\begin{corollary}
    Let $k>1$ be an odd integer.
    Assume that $2 \mid m$ for every $m \in \cal M$.
    Then
    \begin{equation*}
        A_{2k}(n) = (-1)^n A_k(n).
    \end{equation*}
\end{corollary}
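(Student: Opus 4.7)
The plan is to apply Theorem~\ref{thm:multiplicativity} with $k_1 = 2$ and $k_2 = k$, so I first need to verify its hypothesis on the existence of $\ell$. Since $2 \mid m$ for every $m \in \mathcal{M}$ we have $\gcd(m, 2) = 2$, so the second congruence in \eqref{eqn:mult_ell} has modulus $1$ and is automatic, while the first reduces to $\ell \equiv 4 \pmod{k/\gcd(m, k)}$. Each such modulus divides the odd integer $k$, hence any $\ell \equiv 4 \pmod k$ satisfies all of these simultaneously. Taking $\ell = k + 4$ moreover yields an $\ell$ coprime to $2k$: odd because $k$ is odd, and coprime to $k$ because $\gcd(4, k) = 1$.

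The next step is to compute $u(2, k)$ and solve the system \eqref{eqn:mult_n1n2}. Because $m$ is even and $k$ is odd, $\gcd(m, 2k) = 2 \gcd(m, k)$, so $(2k)^2 / \gcd(m, 2k)^2 = k^2/\gcd(m, k)^2$ and the bracket defining $u(2,k)$ telescopes to zero term by term. Choosing $\theta_1 = 8$ and $\theta_2 = 3$ (valid because $\theta_1 \theta_2 = 24$ and $\gcd(16, 3k) = 1$ as $k$ is odd), the system becomes
\begin{align*}
    24 \ell n &\equiv 24 n_1 k^2 \pmod{16}, \\
    24 \ell n &\equiv 96 n_2 \pmod{3k}.
\end{align*}
Reducing the first by cancelling $8$ and using that $\ell, k$ are odd gives $n_1 \equiv n \pmod 2$. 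Reducing the second by cancelling the common factor $3$, and then the unit $8$ modulo $k$, gives $\ell n \equiv 4 n_2 \pmod k$, which combined with $\ell \equiv 4 \pmod k$ forces $n_2 \equiv n \pmod k$.

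The conclusion is then immediate: Theorem~\ref{thm:multiplicativity} yields $A_{2k}(n) = A_2(n_1) \, A_k(n_2)$, and \eqref{eqn:A1A2} together with the $k$-periodicity of $A_k$ converts this into
\[
    A_{2k}(n) = (-1)^{n_1} A_k(n_2) = (-1)^n A_k(n).
\]
The main subtlety lies in fixing $\ell$ so that the second congruence in \eqref{eqn:mult_n1n2} collapses cleanly to $n_2 \equiv n \pmod k$; as explained above, the uniform choice $\ell = k + 4$ simultaneously handles being coprime to $2k$ and being congruent to $4$ modulo $k$, which makes everything line up.
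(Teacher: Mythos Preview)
Your proof is correct and follows the same route as the paper's: both take $k_1=2$, $k_2=k$, $\ell=k+4$, verify that $u(2,k)=0$, and check that $n_1=n_2=n$ solves \eqref{eqn:mult_n1n2}, then conclude via $A_2(n)=(-1)^n$ and the $k$-periodicity of $A_k$. Your write-up simply supplies the details that the paper leaves as ``easy to verify.''
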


\begin{proof}
    The positive integer $\ell = 4 + k_2$ solves \eqref{eqn:mult_ell}, and it is easy to verify that $n_1 = n_2 = n$ solve \eqref{eqn:mult_n1n2}.
	Then the formula follows, since $A_2(n) = (-1)^n$ (see \eqref{eqn:A1A2}).
\end{proof}

\section{Truncating the infinite series: an explicit bound for the error}
\label{sect:error-bound}

Fix an eta-quotient $\eta^\delta$, with the notation as in Section~\ref{sect:etaq}.
In this section we will prove an explicit bound for the error term $R(n,N)$ obtained when truncating the infinite series \eqref{eqn:sussman}, which is given for $n > n_0$ and $N \geq 1$ by
\begin{multline}\label{eq:R-definition}
		R(n,N) := \frac{2\pi}{(24(n-n_0))^{(c_1+1)/2}} \, \cdot \\
            \sum_{\substack{k=N+1 \\ c_3(k)>0}}^{\infty}
            \tfrac{1}{k} \,
		c_2(k) \,
		c_3(k)^{(c_1+1)/2} \,
		A_k(n) \,
		I_{c_1+1}\left(\tfrac{\pi}{k} \sqrt{\tfrac{2}{3} c_3(k)(n-n_0)}\right).
\end{multline}

\begin{remark}
	Such an error bound is needed when using \eqref{eqn:sussman} to compute the
	integer $a(n)$.
	More precisely, taking $N$ such that $\abs{R(n,N)} < 1/2 -\epsilon$ and then
	computing each nonzero summand of \eqref{eqn:sussman} such that $1 \leq k
	\leq N$ with enough precision so that the partial sum can be given with
	error at most $\epsilon$, will give the value of $a(n)$: this is achieved by
	rounding the partial sum to the nearest integer.
	See \cite[Sect. 3.1]{Joh12} (resp. \cite[Sect. 8]{us2023}) for details in
	the case of the partition function (resp. the overpartition function).
\end{remark}

Recall that we denote by $I_\alpha$ the modified Bessel function of the first kind, given by
\begin{equation}
\label{eqn:bessel}
I_{\alpha}(x) = \sum_{m=0}^{\infty} \frac{1}{m!\,\Gamma(m+\alpha+1)}\left(\frac{x}{2}\right)^{2 m+\alpha}.
\end{equation}
We will use the following inequalities regarding $I_\alpha$.
\begin{proposition}
Assume that $x>0$. Then
\begin{align}
\label{eq:Bessel-bound}
I_1(x) & <\frac{x}2 \cosh(x), \\
I_{\alpha}(x) & \leq \left( \frac{x}{2} \right)^{\alpha} \sum_{v = 0}^{\infty} \frac{1}{v!(v+1)!} \left( \frac{x}{2} \right)^{2v}, \quad \alpha \geq 1.
\label{eq:Trivial-bound}
\end{align}
\end{proposition}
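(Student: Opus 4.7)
The plan is to derive both inequalities directly from the series definition \eqref{eqn:bessel} of $I_\alpha$, via term-by-term coefficient comparisons.

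For \eqref{eq:Trivial-bound}, I would pull the factor $(x/2)^\alpha$ out of the series for $I_\alpha(x)$, so that the claim reduces to showing the coefficient bound $\Gamma(v+\alpha+1) \geq (v+1)! = \Gamma(v+2)$ for every integer $v \geq 0$. Since $\alpha \geq 1$ implies $v+\alpha+1 \geq v+2 \geq 2$, and $\Gamma$ is monotonically increasing on $[2,\infty)$ (its unique minimum on the positive reals lies to the left of $2$), this comparison is immediate. Summing the series term-by-term yields \eqref{eq:Trivial-bound}.

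For \eqref{eq:Bessel-bound}, I would expand both sides as power series in $x$: using \eqref{eqn:bessel} and the standard series for $\cosh$, it suffices to prove, for every $v \geq 0$, that the coefficient of $x^{2v+1}$ on the left is at most the corresponding one on the right, with strict inequality in at least one term. The ratio of these two coefficients is $C_v/4^v$, where $C_v = \binom{2v}{v}/(v+1)$ is the $v$-th Catalan number; the binomial expansion of $2^{2v} = \sum_{j=0}^{2v}\binom{2v}{j}$ gives $\binom{2v}{v} \leq 4^v$, hence $C_v \leq 4^v$, and this is strict whenever $v \geq 1$. Since $x > 0$, the term at $v=1$ contributes strictly, so the aggregated inequality is strict and yields $I_1(x) < (x/2) \cosh(x)$.

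Neither step presents a serious obstacle. The only mildly delicate point is the non-integer values of $\alpha$ arising in the paper's applications (where $\alpha = c_1+1$ may be a half-integer), but the $\Gamma$-monotonicity argument on $[2,\infty)$ handles these uniformly. I would therefore keep both arguments brief: factor out $(x/2)^\alpha$ (respectively $x/2$), and reduce to a purely numerical coefficient comparison.
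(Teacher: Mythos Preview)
Your proposal is correct and follows essentially the same route as the paper: both inequalities are obtained by term-by-term comparison of the series \eqref{eqn:bessel} with the target series, reducing \eqref{eq:Trivial-bound} to $\Gamma(v+\alpha+1)\geq(v+1)!$ and \eqref{eq:Bessel-bound} to $\tfrac{(2v)!}{v!(v+1)!\,4^v}<1$ for $v\geq 1$. The paper leaves the latter as ``easily verified'' while you supply the Catalan-number/binomial argument, but the overall strategy is identical.
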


\begin{proof}
The second inequality follows directly from \eqref{eqn:bessel}, using that $v + \alpha + 1 \geq v + 2$ implies that $\Gamma(v + \alpha + 1) \geq \Gamma(v + 2) = (v + 1)!$.

For the first inequality, note that since
\begin{align*}
I_1(x)=\frac{x}{2} \sum_{v=0}^{\infty} \frac{1}{v!(v+1)!} \frac{x^{2 v}}{2^{2 v}} \quad \text{and} \quad \frac{x}{2}\cosh{(x)} = \frac{x}{2} \sum_{v=0}^{\infty} \frac{1}{(2v)!} x^{2v},
\end{align*}
it suffices to compare the coefficients of the corresponding series. 
Then the result follows from the fact that
\begin{equation*}
\frac{1}{v!(v+1)!} \cdot \frac{1}{2^{2 v}}<\frac{1}{(2 v)!} 
\end{equation*}
for $v \geq 1$,
which can be easily verified.
\end{proof}

As was observed in Remark~\ref{rmk:c4}, the functions $c_2$ and $c_3$ are $M$-periodic, with $M := \lcm\{m:m \in \mathcal{M}\}$. Hence, we
can define the constants
\begin{equation*}
	C_2= \max_{c_3(k) > 0} c_2(k), \qquad
	C_3= \max_{c_3(k) > 0} c_3(k).
\end{equation*}

\begin{theorem}\label{thm:explicit-error-bound}
Let $n > n_0$ be a fixed positive integer, and suppose that $c_1 > 0$. Define
\begin{align*}
b(n):=\pi \sqrt{\tfrac{2}{3} C_3\left(n-n_0\right)}.
\end{align*}
Then for every $N \geq 1$ we have that $|R(n, N)| \leq M(n, N)$,  where 
\begin{align}\label{eqn:error-bound-definition}
    M(n, N) := \frac{2 \pi C_2}{c_1} \left(\frac{\pi C_3}{12}\right)^{(1+c_1)} \frac{N+1+c_1}{(N+1)^{1 + c_1}}\cosh{\left(\frac{b(n)}{N+1}\right)}.
\end{align}
Moreover, for every fixed $n > n_0$, we have that $M(n, N)$ is strictly decreasing as a function of $N$, and satisfies the asymptotic formula
\begin{align}\label{eqn:error_bound_asymptotic}
    M(n, N) = \frac{2 \pi C_2}{c_1} \left(\frac{\pi C_3}{12}\right)^{(1+c_1)} \frac{1}{(N+1)^{c_1}} \left( 1 + o(1) \right),
    \quad \text { as } N \rightarrow \infty.
\end{align}
\end{theorem}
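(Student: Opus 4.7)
The plan is to bound $|R(n,N)|$ by estimating each factor of the summand in \eqref{eq:R-definition} individually, and then summing a manageable tail. First I use the trivial bounds: $|A_k(n)| \leq \varphi(k) \leq k$ (cancelling the $1/k$ factor), $c_2(k) \leq C_2$, and $c_3(k)^{(c_1+1)/2} \leq C_3^{(c_1+1)/2}$, all immediate from the definitions.

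The key step is an estimate for the Bessel factor. Combining \eqref{eq:Trivial-bound} (which requires $\alpha = c_1 + 1 \geq 1$, satisfied since $c_1 > 0$) with the identity $\sum_{v\geq 0}\frac{1}{v!(v+1)!}(x/2)^{2v} = 2I_1(x)/x$, read off from \eqref{eqn:bessel}, yields $I_{c_1+1}(x) \leq (x/2)^{c_1}\, I_1(x)$; then \eqref{eq:Bessel-bound} gives $I_{c_1+1}(x) < (x/2)^{c_1+1}\cosh(x)$. Since the Bessel argument in \eqref{eq:R-definition} is at most $b(n)/k$ and both $(x/2)^{c_1+1}$ and $\cosh(x)$ are increasing on $[0,\infty)$, the Bessel factor is at most $(b(n)/(2k))^{c_1+1}\cosh(b(n)/k)$.

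Inserting these bounds into \eqref{eq:R-definition}, the prefactor collapses via the identity
\[
\frac{C_3^{(c_1+1)/2}\, b(n)^{c_1+1}}{2^{c_1+1}(24(n-n_0))^{(c_1+1)/2}} = \left(\frac{\pi C_3}{12}\right)^{c_1+1},
\]
giving
\[
|R(n, N)| \leq 2\pi C_2 \left(\frac{\pi C_3}{12}\right)^{c_1+1} \sum_{k=N+1}^{\infty} \frac{\cosh(b(n)/k)}{k^{c_1+1}}.
\]
Since $\cosh(b(n)/k)$ is decreasing in $k$, I pull out its value at $k = N+1$. The remaining sum $\sum_{k \geq N+1} k^{-(c_1+1)}$ is bounded via the integral test by $(N+1)^{-(c_1+1)} + \int_{N+1}^\infty x^{-(c_1+1)}\,dx = \tfrac{1}{c_1}\cdot\tfrac{N+1+c_1}{(N+1)^{c_1+1}}$, where $c_1 > 0$ is needed for convergence, and this produces exactly $M(n,N)$.

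For the monotonicity, I factor the $N$-dependent part of $M(n,N)$ as $\bigl((N+1)^{-c_1} + c_1(N+1)^{-(1+c_1)}\bigr)\cosh\bigl(b(n)/(N+1)\bigr)$; each of the three positive factors is strictly decreasing in $N$ (the last because $\cosh$ is increasing on $[0,\infty)$ and $b(n)/(N+1)$ is decreasing), so their product is too. The asymptotic formula is immediate from $(N+1+c_1)/(N+1)^{1+c_1} = (N+1)^{-c_1}(1 + c_1/(N+1))$ and $\cosh(b(n)/(N+1)) \to 1$. The only mildly delicate piece is the Bessel inequality together with the prefactor identity; everything else is routine bookkeeping.
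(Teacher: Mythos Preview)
Your proof is correct. Both arguments use the same ingredients --- the trivial bounds on $A_k(n)$, $c_2(k)$, $c_3(k)$, the two Bessel inequalities \eqref{eq:Trivial-bound} and \eqref{eq:Bessel-bound}, and the integral test --- but you assemble them in a different order. The paper expands $I_{1+c_1}$ via \eqref{eq:Trivial-bound}, interchanges the sums over $k$ and over the series index $v$, applies the integral test to $\sum_{k>N} k^{-(2v+1+c_1)}$ separately for each $v$, then relaxes $(2v+c_1)^{-1}$ to $c_1^{-1}$ so that the series recombines into $I_1$, and only then invokes \eqref{eq:Bessel-bound}. You instead fuse \eqref{eq:Trivial-bound} and \eqref{eq:Bessel-bound} at the outset into the single pointwise bound $I_{1+c_1}(x) < (x/2)^{c_1+1}\cosh x$, pull out the $\cosh$ factor by monotonicity in $k$, and apply the integral test once to $\sum_{k>N} k^{-(c_1+1)}$. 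Your route is shorter and avoids the double-sum manipulation; the paper's relaxation $1/(2v+c_1) \leq 1/c_1$ discards exactly the same precision as your early extraction of the $\cosh$, so the two arguments land on the same $M(n,N)$. The monotonicity and asymptotic arguments are essentially identical to the paper's.
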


\begin{proof}
Let $k$ be such that $c_3(k) > 0$. 
We have that $\abs{c_2(k)} \leq C_2$ and $\abs{c_3(k)} \leq C_3$.
We also trivially see that $\abs{A_k(n)} \leq k$.
Using these inequalities in the definition \eqref{eq:R-definition} for $R(n, N)$ we obtain
\begin{align}\label{eq:error-preliminary-bound}
    \abs{R(n, N)} &\leq 2\pi C_2 \left(\frac{C_3}{24(n - n_0)}\right)^{(1 +
	c_1)/2} \sum_{\substack{k=N+1 \\ c_3(k)>0}}^{\infty}
	I_{1+c_1}\left(\tfrac{\pi}{k} \sqrt{\tfrac{2}{3} c_3(k)\left(n-n_0\right)}\right).
\end{align}
Since $I_{1 + c_1}(x)$ is increasing for $x > 0$ we get
that
\begin{align*}
I_{1+c_1}\left(\tfrac{\pi}{k} \sqrt{\tfrac{2}{3} c_3(k)\left(n-n_0\right)}\right) \leq I_{1+c_1}\left(\frac{b(n)}{k}\right).
\end{align*}
Using this inequality along with \eqref{eq:Trivial-bound}, and interchanging the order of summation, we have
\begin{multline}\label{eq:inifnite-bessel-sum-bound}
\sum_{\substack{k=N+1 \\ c_3(k)>0}}^{\infty} I_{1+c_1}\left(\tfrac{\pi}{k} 
\sqrt{\tfrac{2}{3} c_3(k)\left(n-n_0\right)}\right) 
 \leq \sum_{k = N+1}^{\infty} I_{1 + c_1}\left( \frac{b(n)}{k} \right) \\
\leq \sum_{k = N+1}^{\infty} \left( \frac{b(n)}{2k} \right)^{1 + c_1} \sum_{v = 0}^{\infty} \frac{1}{v! (v+1)!} \left( \frac{b(n)}{2k} \right)^{2v} 
= \sum_{v = 0}^{\infty} \frac{1}{v!(v+1)!} \sum_{k = N+1}^{\infty} \left( \frac{b(n)}{2k} \right)^{2v + 1 + c_1} 
\\ = \sum_{v = 0}^{\infty} \frac{1}{v!(v+1)!} \left( \frac{b(n)}{2} \right)^{2v + 1 + c_1} \sum_{k = N+1}^{\infty} \frac{1}{k^{2v + 1 + c_1}}.
\end{multline}

Now, recall that if $f \colon [1, \infty) \to \R$ is a positive, decreasing function, then
$$
\sum_{k = T}^{\infty} f(k) \leq f(T) + \int \limits_{T}^{\infty} f(x) \, dx,
$$
for every $T \in \Z_{\geq 1}$.
Applying this to the function $f(x) := \dfrac{1}{x^{2v + 1 + c_1}}$ with $T = N + 1$, and using that $c_1>0$, we get
\begin{align}\label{eq:integral-test-bound}
    \sum_{k = N + 1}^{\infty} \frac{1}{k^{2v + 1 + c_1}} &\leq \frac{1}{(N + 1)^{2v + 1 + c_1}} + \int_{N+1}^{\infty} \frac{1}{x^{2v + 1 + c_1}} \, dx \\ \notag
    &= \frac{1}{(N + 1)^{2v + 1 + c_1}} + \frac{1}{2v + c_1} \frac{1}{(N + 1)^{2v + c_1}}.
\end{align}
Then, combining \eqref{eq:inifnite-bessel-sum-bound} and \eqref{eq:integral-test-bound}, and using that
${I_1(x) = \sum_{v=0}^{\infty} \frac{1}{v!(v+1)!} \left( \frac{x}{2} \right)^{2v+1}}$,
we obtain
\begin{align*}
&\quad \sum_{\substack{k=N+1 \\ c_3(k)>0}}^{\infty}
I_{1+c_1}\left(\tfrac{\pi}{k} \sqrt{\tfrac{2}{3} \left(n-n_0\right)}\right) 
\leq \sum_{v = 0}^{\infty} \frac{1}{v!(v+1)!} \left( \frac{b(n)}{2} \right)^{2v + 1 + c} \sum_{k = N+1}^{\infty} \frac{1}{k^{2v + 1 + c_1}} \\
&\leq \sum_{v = 0}^{\infty} \frac{1}{v!(v+1)!} \left( \frac{b(n)}{2} \right)^{2v + 1 + c_1} \left( \frac{1}{(N + 1)^{2v + 1 + c_1}} + \frac{1}{2v + c_1} \frac{1}{(N + 1)^{2v + c_1}}  \right)\\
&= \left( \frac{b(n)}{2(N+1)} \right)^{c_1} \sum_{v = 0}^{\infty} \frac{1}{v!(v+1)!} \left( \frac{b(n)}{2} \right)^{2v + 1} \left( \frac{1}{(N + 1)^{2v + 1}} + \frac{1}{2v + c_1} \frac{1}{(N + 1)^{2v}}  \right)\\
&\leq \left( \frac{b(n)}{2(N+1)} \right)^{c_1} \sum_{v = 0}^{\infty} \frac{1}{v!(v+1)!} \left( \frac{b(n)}{2} \right)^{2v + 1} \left( \frac{1}{(N + 1)^{2v + 1}} + \frac{1}{c_1} \frac{1}{(N + 1)^{2v}}  \right)\\
&= \left( \frac{b(n)}{2(N+1)} \right)^{c_1} \left( I_1\left( \tfrac{b(n)}{N+1} \right) + \frac{N+1}{c_1} I_1\left( \tfrac{b(n)}{N+1} \right) \right)
= \frac{N + 1 + c_1}{c_1} \left( \frac{b(n)}{2(N+1)} \right)^{c_1}  I_1\left( \tfrac{b(n)}{N+1} \right),
\end{align*}
which, together with \eqref{eq:Bessel-bound} and \eqref{eq:error-preliminary-bound}, implies that

\begin{align}\label{eq:R-prelim-bound}
|R(n, N)| \leq 2 \pi C_2\left(\frac{C_3}{24\left(n-n_0\right)}\right)^{\left(1+c_1\right) / 2} \frac{N + 1 + c_1}{c_1} \left( \frac{b(n)}{2(N+1)} \right)^{1 + c_1}  \cosh{\left( \frac{b(n)}{N+1} \right)}.
\end{align}

Now, to simplify the terms that do not depend on $N$ in the inequality
\eqref{eq:R-prelim-bound}, note that by using the definition $b(n) :=\pi
\sqrt{\frac{2}{3} C_3\left(n-n_0\right)}$ we have
\begin{multline*}
\left(\frac{C_3}{24\left(n-n_0\right)}\right)^{\left(1+c_1\right) / 2} \left(
\frac{b(n)}{2} \right)^{1 + c_1} = \left(\frac{C_3 b(n)^2}{96\left(n-n_0\right)}\right)^{\left(1+c_1\right) / 2}\\
= \left( \frac{C_3}{96(n - n_0)} \cdot \pi^2 \cdot \frac{2}{3} \cdot C_3 \cdot (n - n_0) \right)^{(1 + c_1)/2}
= \left( \frac{\pi C_3}{12} \right)^{1 + c_1}.
\end{multline*}
Thus, using the previous simplification in \eqref{eq:R-prelim-bound}, we finally obtain
\begin{align*}
|R(n, N)| \leq M(n,N).
\end{align*}

Now we will analyze the asymptotic decay of
$M(n,N)$
as $N \to \infty$. Observe first that for every fixed $n > n_0$, we have that $M(n, N)$ is strictly decreasing as a function of $N$. This can be easily seen by checking individually that the terms
$$
\frac{N+1+c_1}{(N+1)^{1+c_1}} \quad \text{and} \quad \cosh \left(\frac{b(n)}{N+1}\right)
$$
are strictly decreasing. For instance by differentiating them with respect to $N$ one immediately sees that their derivatives are negative for every $N \geq 1$.
Finally, since $\cosh{(x)} = 1 + o(1)$ as $x \to 0$, we have
$$
\cosh{\left(\frac{b(n)}{N+1}\right)} = 1 + o(1)
$$
as $N \to \infty$. Similarly, as $N \to \infty$ we have
$$
\frac{N+1+c_1}{(N+1)^{1+c_1}} = \frac{1}{(N+1)^{c_1}} \left(1 + o(1)  \right).
$$
Therefore, using the last two asymptotic formulas in the formula defining $M(n, N)$ gives the asymptotic formula \eqref{eqn:error_bound_asymptotic}. This finishes the proof of the theorem.
\end{proof}

\begin{remark}
If $c_1 = 0$, we would have to separate the case of $v = 0$ in \eqref{eq:integral-test-bound}, which would lead to a logarithm for that term in the integration.
\end{remark}

\begin{remark}
	The trivial bound $\abs{A_k(n)} \leq k$ used in the proof can be improved in
	certain cases.
	After Theorem~\ref{thm:kloosterman}, there exist
	integers $a,b$ and characters $\chi,\rho$ such that
	$\abs{A_k(n)} = \abs{S_{\chi\rho}(a-n,b;k)}$.
	When $\gcd(b,k) = 1$ this twisted Kloosterman sum can be bounded by $2^{\omega(k)}\sqrt{k}$,
	using \eqref{eqn:k-mult} and the explicit formulas from
	Section~\ref{sect:kloosterman_sums} for the prime power case.
	Examples show that when $\gcd(b,k) \neq 1$ this bound does not hold; we
	have not been able to describe the behaviour of $\abs{A_k(n)}$ in these
	cases.
\end{remark}

\begin{example}
To exemplify Theorem \ref{thm:explicit-error-bound} we consider the case of overpartitions. For them, as listed in Table \ref{tab:examples}, the eta quotient is determined by $\delta = \{(1,-2),(2,1)\}$.
In \cite[Thm.~4.1]{us2023}, Barquero-Sanchez, Sirolli, Villegas-Morales and coauthors proved that the corresponding error term $R(n, N)$ satisfies the bound 
\[
|R(n, N)| \leq M_0(n, N),
\]
where sharper constants could be obtained by exploiting the particular structure of the overpartition generating function. Specifically, the bound is given by
\begin{align*}
M_0(n, N):=\tfrac{1}{4 \pi}\left(\tfrac{N+1}{n}\right)^{3 / 2}\left(\frac{\pi \sqrt{n}}{N+1} \cosh \left(\frac{\pi \sqrt{n}}{N+1}\right)+(2 N+1) \sinh \left(\frac{\pi \sqrt{n}}{N+1}\right)-2 \pi \sqrt{n}\right),
\end{align*}
and moreover it satisfies the asymptotic formula
\begin{align}\label{eqn:overpartition-old-asymptotic}
M_0(n, N)=\frac{\pi^2}{12} \frac{1}{\sqrt{N+1}}\,(1+o(1)) \quad \text { as } N \rightarrow \infty.
\end{align}

In contrast, Theorem \ref{thm:explicit-error-bound} provides an explicit upper bound that applies uniformly to a broad class of eta quotients. In the present example, we specialize the general parameters of Theorem \ref{thm:explicit-error-bound} to the overpartition case and compare the resulting bound with the one obtained in \cite{us2023}. This allows us to verify that the general bound is of comparable qualitative strength, while naturally involving a slightly weaker constant due to the need for uniform estimates in a more general setting.

Now, in this case the input for Theorem \ref{thm:explicit-error-bound} is given by
\begin{align*}
c_1 &= -\frac{1}{2} \sum_m \delta_m = -\frac{1}{2}\left( -2 + 1  \right) = \frac{1}{2},\\
n_0 &= -\frac{1}{24} \sum_m m \delta_m = -\frac{1}{24} \left( 1 \cdot(-2) + 2 \cdot (1) \right) = 0.
\end{align*}
Moreover, the period $M$ of the functions $c_2$ and $c_3$ is $M = \lcm\{ m \in \mathcal{M} \} = \lcm\{1, 2\} = 2$. Hence, we only need to compute $c_2(k)$ and $c_3(k)$ for $k = 1, 2$. Note that
\begin{align*}
c_2(1) &= 1/2, \quad c_2(2) = 1,\\
c_3(1) &= 3/2, \quad c_3(2) = 0.
\end{align*}
Therefore
\begin{align*}
	C_2 &= \max \{c_2(k) \colon \text{$1 \leq k \leq 2$ and $c_3(k) > 0$}
	\} = c_2(1) = 1/2, \\
	C_3 &= \max \{c_3(k) \colon \text{$1 \leq k \leq 2$ and $c_3(k) > 0$}
	\} = c_3(1) = 3/2,\\
b(n) &= \pi \sqrt{ \tfrac{2}{3}\,C_3(n - n_0)} = \pi \sqrt{n}.
\end{align*}

Plugging these values into the formulas \eqref{eqn:error-bound-definition} and \eqref{eqn:error_bound_asymptotic}, we find that Theorem \ref{thm:explicit-error-bound} implies that the error term satisfies the bound $|R(n, N)| \leq M(n, N)$ with
\begin{align*}
M(n, N) = \frac{\pi^{5/2}}{2^{7/2}} \frac{N+3/2}{(N+1)^{3/2}}
\cosh\!\left(\frac{\pi\sqrt{n}}{N+1}\right),
\end{align*}
and moreover, for every fixed $n \geq 1$, that $M(n, N)$ satisfies the asymptotic formula
\begin{align}\label{eqn:error_bound_asymptotic-overpartitions}
M(n, N) = \frac{\pi^{5/2}}{2^{7/2}} \frac{1}{\sqrt{N+1}} \left( 1 + o(1) \right)
\quad \text{as } N \to \infty.
\end{align}

In particular, comparing the asymptotic formulas \eqref{eqn:overpartition-old-asymptotic} and \eqref{eqn:error_bound_asymptotic-overpartitions}, we observe that the only difference lies in the leading constant: in \eqref{eqn:overpartition-old-asymptotic} the constant is $\frac{\pi^2}{12} \approx 0.8224670$, whereas in \eqref{eqn:error_bound_asymptotic-overpartitions} it is $\frac{\pi^{5/2}}{2^{7/2}} \approx 1.5462143$. This discrepancy is a natural consequence of the generality of Theorem \ref{thm:explicit-error-bound}, where some sharpness in the constants is necessarily sacrificed in order to obtain uniform bounds that apply to a wide class of eta quotients. Nevertheless, the comparison shows that the general bound retains essentially the same qualitative strength as the specialized result for overpartitions proved in \cite[Thm.~4.1]{us2023}.
\end{example}

\begin{example}
	\label{ex:5-colored}
We now consider the case of the (normalized) eta-quotient giving the $5$-colored
partitions: it is given by $\delta = \{(24,-5)\}$.
In this case the input for Theorem \ref{thm:explicit-error-bound} is given by
\begin{equation*}
c_1 = -\frac{1}{2} \sum_m \delta_m = -5/2, \qquad 
n_0 = -\frac{1}{24} \sum_m m \delta_m = 5.
\end{equation*}
Moreover, the period $M$ of the functions $c_2$ and $c_3$ is $M = \lcm\{ m \in
\mathcal{M} \} = 24$.
Therefore, a short calculation in \texttt{SageMath} gives
\begin{align*}
	C_2 &= \max \{c_2(k) \colon \text{$1 \leq k \leq 24$ and $c_3(k) >
	0$} \} = 32 \cdot 6^{5/2}, \\
	C_3 &= \max \{c_3(k) \colon \text{$1 \leq k \leq 24$ and $c_3(k) > 0$} \} = 120,\\
b(n) &= \pi \sqrt{ \tfrac{2}{3}C_3(n - n_0)} = 4\pi \sqrt{5(n-5)}.
\end{align*}

In order to use Sussman's formula to compute the number of $5$-colored
partitions of $n = 10^6$ we plug these parameters into
\eqref{eqn:error-bound-definition} and solve the inequality
\begin{equation*}
	M(24\cdot 10^6,N) = 2^{13} \cdot 15^{5/2}\cdot \pi^{9/2} \frac{N+7/2}{(N+1)^{7/2}} \cdot
	\cosh\left(\frac{4\pi\sqrt{5(n-5)}}{N+1}\right) < \frac12-\frac1{10}
\end{equation*}
(where we subtract $1/10$ for safety to avoid possible floating-errors),
finding that the first solution is given by $N = 29,881$.
Thus the desired number is obtained by rounding the $N$-th partial sum of
$\eqref{eqn:sussman}$ to the nearest integer; it gives the 286 digits number
	\seqsplit{1709349027900160426231759812453331777798866621250418728438426924737182606775572832365632388835153251494246284189595526452568125715442465679227412598478201430342120278743044893385482725485903720494115472223141533588181420284373293199200749115480647779688721463489973392452731512257715631}.

\end{example}

\section{Overview: Algorithm and examples}
\label{sect:algorithm&examples}

We summarize our main results using them in an algorithm for computing HRR coefficients, and describing its use in some examples.
We keep the notation from Section~\ref{sect:etaq}.

\medskip

\SetKwComment{Comment}{/* }{ */}
\SetKw{KwReturn}{return}
\SetKwInput{KwData}{Input}
\SetKwInput{KwResult}{Output}

\begin{algorithm}[H]
\caption{Evaluation of $A_{k}(n)$}
\label{alg:Ak}

\BlankLine

\KwData{An eta-quotient $\eta^\delta$, and integers $k\geq 1$, $n \geq 1$ }

\KwResult{$A_{k}(n)$, as defined in \eqref{eqn:Akn}}

\BlankLine

Factorize $k = p_{1}^{\alpha_{1}}\cdots p_{j}^{\alpha_{j}}$

$n_3 \gets n; \, k_2 \gets k$

$s \gets 1$

$i \gets 1$

\While{$s\neq 0$ and $k_2 \neq 1$}{
    $k_1 \gets p_{i}^{\alpha_{i}};\, q \gets k_1; \, k_2 \gets k_2 / k_1$
	\label{step:choose}

	$n_1,n_2 \gets$ Theorem~\ref{thm:multiplicativity}, applied to $n_3$
	\label{step:multiplicativity}
    
    $s_1 \gets A_{q}(n_1)$, using Theorem~\ref{thm:kloosterman} and the
    formulas from Section~\ref{sect:kloosterman_sums} \label{step:primepow}

    $s \gets s \cdot s_1$

	$n_3 \gets n_2$
    
    $i \gets i + 1$
}
\KwReturn{$s$}
\end{algorithm}

\bigskip

Some remarks regarding this procedure:

\begin{enumerate}
	\item
		If Theorem~\ref{thm:multiplicativity} can not be applied
		in Step~\ref{step:multiplicativity} (even choosing another prime power $k_1$ in Step~\ref{step:choose}),
		then we should use Theorem~\ref{thm:kloosterman} to compute $A_{k_2}(n_3)$, and
		then use multiplicativity formula~\eqref{eqn:k-mult}, in this and all of the remaining steps.
		See Example~\ref{ex:no_odd_parts_repeated}.
	\item 
		When looping this algorithm over $k$, the value of $s_1$
		should be stored and used when further needed.
	\item
		If the results from Section~\ref{sect:kloosterman_sums} do not give a
		closed formula for computing $s_1$
		(i.e., in the situation of Remark~\ref{rmk:satotate}: when $q = p > 3$ and $\chi = \tri$),
		it should be computed by definition.
		See Examples~\ref{ex:odd_parts}, \ref{ex:r-colored}.
	\item If the situation of Remark~\ref{rmk:satotate} does not arise then, due
		to the similarity of our procedure with that from \cite{Joh12}, we
		expect that a careful complexity analysis would yield a similar result as
		in op. cit., namely time $O(n^{1/2} \log^{4+o(1)} n)$.
	\item
		The algorithm will terminate, and return $0$, whenever the value $s_1$
		computed in Step~\ref{step:primepow} is $0$.
		According to Propositions~\ref{prop:acuadradop}
		and~\ref{prop:acuadrado}, this will be the situation roughly half of the
		times for each odd $q$.
		Thus, in general, we expect the $k$-th summand of \eqref{eqn:sussman} to be
		nonzero only $1/2^j$ of the times, where $j$ denotes the number of
		prime divisors of $k$.
\end{enumerate}

\medskip

The following are examples from Table~\ref{tab:examples}, normalized according
to Remark~\ref{rmk:shift}.
In all of them we used Remark~\ref{rmk:c4} to check that the hypothesis on
$c_4$ required by Theorem~\ref{thm:sussman} is verified, and to describe which
are the values of $k$ such that $c_3(k)>0$.

\begin{example} 
	\label{ex:partitions}
	Let $\delta = \{(24,-1)\}$, the normalization of $\{(1,-1)\}$.
    In this case we have that $c_3(k)>0$ for every $k$.
    The hypothesis of Theorem~\ref{thm:multiplicativity} is satisfied for every $k_1,k_2$, since $\# \mathcal{M} = 1$.
    The parameter $\ell$ given by this theorem equals $1$ when $\gcd(k_1 k_2, 6) = 1$, but this is not the situation in general. 
	The local characters described in \eqref{eqn:char_locales} are given by
	\[
		\chi_p =
		\begin{cases}
			\tri, & 2\mid \alpha - \val_p(3),\\
			\krop, & \text{otherwise},
		\end{cases}
		\qquad
		\psi =
		\begin{cases}
			\tri, & \quad \alpha < 4,\\
			\legendre{-2}{\cdot}, & \alpha \geq 4 \wedge 2\mid\alpha,\\
			-\legendre{-1}{\cdot}, & \alpha \geq 4 \wedge 2\nmid\alpha.
		\end{cases}
	\]

    This example (without the normalization) was treated with detail in \cite{Leh38}.
\end{example}

\begin{example}
	\label{ex:overpartitions}
	Let $\delta = \{(24,-2),(48,1)\}$, the normalization of $\{(1,-2),(2,1)\}$.
    In this case $c_3(k)>0$ if and only if $16 \nmid k$.
	The hypothesis of Theorem~\ref{thm:multiplicativity} is satisfied for every
	$k_1,k_2$ such that $\gcd(k_1,k_2)$ = 1 and $c_3(k_1 k_2) > 0$: assuming,
	without loss of generality, that $2 \nmid k_2$ (so that $16 \nmid k_1)$, the
	system in \eqref{eqn:mult_ell} is equal to
	\begin{equation*}
	\begin{cases}
		\ell \equiv \gcd\left(24,k_1\right)^2\quad
		\left(\mymod{\frac{k_2}{\gcd\left(3,k_2\right)}}\right),\\
		\ell \equiv \gcd\left(3,k_2\right)^2\quad
			\left(\mymod{\frac{k_1}{\gcd\left(24,k_1\right)}}\right),
	\end{cases}
	\end{equation*}
	which is compatible.
	The local characters described in \eqref{eqn:char_locales} are given by
	\[
		\chi =
		\begin{cases}
			\tri, & 2\mid \alpha - \val_p(3),\\
			\krop, & \text{otherwise},
		\end{cases}
		\qquad
		\psi =
		\begin{cases}
			\tri, & \quad \alpha < 5,\\
			\tlegendre{-2}{\cdot}, & \alpha \geq 5 \wedge 2\nmid\alpha,\\
			-\tlegendre{-1}{\cdot}, & \alpha \geq 5 \wedge 2\mid\alpha.
		\end{cases}
	\]

	This example (without the normalization) was treated with
	detail in \cite{us2023}.
\end{example}

\begin{example}
    \label{ex:no_odd_parts_repeated}
    Let $\delta = \{(24,-1),(48,-1),(96,1)\}$, normalization of
	$\{(1,-1),(2,1),(4,-1)\}$.
    Here $c_3(k)>0$ if and only if $ 32 \nmid k$.
	Given $k_1,k_2$ such that $\gcd(k_1,k_2) = 1$ and $c_3(k_1 k_2) > 0$,
	assuming, without loss of generality, that $2 \nmid k_2$ (so that $32 \nmid
	k_1)$, a simple calculation shows that there exists $\ell$ such that
	\eqref{eqn:mult_ell} is compatible if
	and only if
	\begin{equation*}
		\gcd(16,k_1) \equiv \gcd(8,k_1) \quad
			\left(\mymod{\tfrac{k_2}{\gcd(3,k_2)}}\right).
	\end{equation*}
	In particular, when $k_1 = 5$ and $k_2 = 16$
	we cannot use Theorem~\ref{thm:multiplicativity};
	moreover, we verified numerically that there do not exist $n_1,n_2$ such that
	$A_{80}(16) = A_5(n_1) \cdot A_{16}(n_2)$.
	Nevertheless, for every $n$, using
	Theorem~\ref{thm:kloosterman} and \eqref{eqn:k-mult} we can write
	\begin{align*}
		A_{80}(n)
		= &
		-i \cdot S_{\tlegendre{-10}{\cdot}}(49-n,43;80)
		\\
		= &
		-i \cdot S_{\tlegendre{5}{\cdot}}\left(1\cdot(49-n),1\cdot43;5\right) \cdot
		S_{\tlegendre{-2}{\cdot}}\left(13\cdot(49-n),13\cdot43;16\right),
	\end{align*}
	and then use the closed formulas from Section~\ref{sect:kloosterman_sums}
	for computing $S_\chi(a,b;q)$ when $q=5,16$.
	
\end{example}

The following two examples show that there are situations where the efficiency
of our method depends on the efficiency for computing Kloosterman sums
$S_\tri(a,1;p)$.

\begin{example}
	\label{ex:odd_parts}
    Let $\delta = \{(24,-1),(48,1)\}$, the normalization of
	$\{(1,-1),(2,1)\}$.
    In this case $c_3(k)>0$ if and only if $16 \nmid k$.
	Moreover, the system \eqref{eqn:mult_ell} is the same as in
	Example~\ref{ex:overpartitions}, hence it is compatible.
	The local characters $\chi_p$ described in \eqref{eqn:char_locales} are
	trivial, whereas
	\[
		\psi =
		\begin{cases}
            \tri, & \alpha < 4,\\
			\tlegendre{-2}{\cdot}, & \alpha = 4,\\
			-\tlegendre{2}{\cdot}, & \alpha \geq 5.
		\end{cases}
	\]

\end{example}

\begin{example}
	\label{ex:r-colored}
	Let $r \in \mathbb N$, and let $\delta = \{(24,-r)\}$, the normalization of
	$\{(1,-r)\}$.
	Here the situation is the same as in Example~\ref{ex:partitions}; with the
	exception that, when $r$ is even,
	the local characters $\chi_p$ and $\psi$ described in \eqref{eqn:char_locales} are trivial.
\end{example}

\newcommand{\etalchar}[1]{$^{#1}$}

\end{document}